\numberwithin{equation}{section}
\newtheorem{thm}{Theorem}[section]
\newtheorem{proposition}[thm]{Proposition}
\definecolor{ballblue}{rgb}{0.13, 0.67, 0.8}
\theoremstyle{definition}
\newtheorem{rem}[thm]{Remark}
\newcommand{\R}{\mathbb{R}}
\newcommand{\p}{\partial}
\newcommand {\f}{\frac}
\newcommand{\eps}{\varepsilon}
\newcommand{\diff}{\mathop{}\!\mathrm{d}}
\newcommand{\ueps}{u_{\varepsilon}}
\newcommand{\barueps}{\bar{u}_{\varepsilon}}
\newcommand{\Weps}{W_{\varepsilon}}
\newcommand{\Dx}{\Delta x}
\newcommand{\dy}{\textrm{d}y}
\newcommand{\uepsbar}{\overline{u}_{\varepsilon}}
\newcommand{\dx}{\textrm{d}x}
\newcommand{\doublewidetilde}[1]{{%
  \mathpalette\double@widetilde{#1}%
}}
\newcommand{\double@widetilde}[2]{%
  \sbox\z@{$\m@th#1\widetilde{#2}$}%
  \ht\z@=.9\ht\z@
  \widetilde{\box\z@}%
}
\author{José A. Carrillo}
\address{{\it José A. Carrillo:} Mathematical Institute, University of Oxford, Woodstock Road, Oxford, OX2 6GG, United Kingdom}
\email{carrillo@maths.ox.ac.uk}
\author{Charles Elbar}
\address{{\it Charles Elbar:} Université Claude Bernard Lyon 1, ICJ UMR5208, CNRS, Ecole Centrale de Lyon, INSA Lyon, Université Jean Monnet, 69622
Villeurbanne, France}
\email{elbar@math.univ-lyon1.fr}
\thanks{}
\author{Stefano Fronzoni}
\address{{\it Stefano Fronzoni:} Mathematical Institute, University of Oxford, Woodstock Road, Oxford, OX2 6GG, United Kingdom}
\email{fronzoni@maths.ox.ac.uk}
\author{Jakub Skrzeczkowski}
\address{{\it Jakub Skrzeczkowski: } Mathematical Institute, University of Oxford, Woodstock Road, Oxford, OX2 6GG, United Kingdom}
\email{jakub.skrzeczkowski@maths.ox.ac.uk}
\begin{document}

\title[Rate of convergence for a nonlocal-to-local limit in one dimension]{Rate of convergence for a nonlocal-to-local limit in one dimension}

\begin{abstract}
We consider a nonlocal approximation of the quadratic porous medium equation where the pressure is given by a convolution with a mollification kernel. It is known that when the kernel concentrates around the origin, the nonlocal equation converges to the local one. In one spatial dimension, for a particular choice of the kernel, and under mere assumptions on the initial condition, we quantify the rate of convergence in the 2-Wasserstein distance. Our proof is very simple, exploiting the so-called Evolutionary Variational Inequality for both the nonlocal and local equations as well as a priori estimates. We also present numerical simulations using the finite volume method, which suggests that the obtained rate can be improved - this will be addressed in a forthcoming work.
\end{abstract}

\keywords{nonlocal equation, nonlocal-to-local limit, blob method, particle method, porous medium equation, evolutionary variational inequality, rate of convergence, finite volume method}

\subjclass{35A15, 35Q70, 35B40, 65M08}

\maketitle
\setcounter{tocdepth}{1}

\section{Introduction}

We consider the nonlocal PDE 
\begin{equation}\label{eq:nonlocal_PDE_one_dimension}
\p_t u_\eps  - \p_x(u_\eps\,  \p_x W_{\eps}\ast u_\eps)=0
\end{equation}
where $W_{\eps} \approx \delta_0$ as $\eps \to 0$. More precisely, we choose $W_{\eps}$ to satisfy $W_{\eps}(x) = \f{1}{\eps} W\left(\f{x}{\eps}\right)$ where $W(x) =\f{1}{2} e^{-|x|}$. It is well-known that $W_{\eps}$ satisfies the elliptic PDE
\begin{equation}\label{eq:elliptic_PDE}
\quad -\eps^2\p_{xx}W_\eps + W_\eps = \delta_{0}.
\end{equation}
It is known \cite{david2023degenerate, elbar2023inviscid} that solutions of \eqref{eq:nonlocal_PDE_one_dimension} converge as $\eps \to 0$ to the solution of
\begin{equation}\label{eq:local_PDE}
\p_tu - \p_x(u \, \p_x u) = \p_t u - \p_{xx}\f{u^2}{2}=0. 
\end{equation}

The nonlocal-to-local limit as above appears in several contexts. In mathematical biology, it connects models of tissue growth with so-called Brinkman's and Darcy's pressures as given by \eqref{eq:nonlocal_PDE_one_dimension} and \eqref{eq:local_PDE} respectively \cite{elbar2023inviscid, david2023degenerate}. This type of model has been studied in the context of their connection with the free boundary model of tumor growth \cite{MR3695889, MR3162474, MR4188329, MR4324293}. In numerical analysis, the nonlocal equation \eqref{eq:nonlocal_PDE_one_dimension}, viewed as a continuity equation, provides an approximation scheme, called the blob method, for solving \eqref{eq:local_PDE}. Starting from works of Oelschl\"ager \cite{Ol} and Lions-MasGallic \cite{MR1821479}, this approach has been extensively studied and generalized to more complicated equations and systems, including nonlinear porous medium equations \cite{carrillo2023nonlocal, MR3913840}, cross-diffusion systems \cite{Hecht2023porous, burger2022porous}, heat equation and fast-diffusion equations \cite{MR4858611, carrillo2024nonlocal}. \\\

The target of this paper is to provide a simple argument leading to the rate of convergence as $\eps \to 0$ in the Wasserstein distance. In what follows, we write $\mathcal{P}_2(\R)$ for the space of probability measures $\rho$ such that $\int_{\R} \rho\, |x|^2 \diff x < \infty$ and $\mathcal{W}_2$ for the second Wasserstein distance.

\begin{thm}\label{thm:bessel_rate_eps}

Let $u^0 \in \mathcal{P}_2(\R)\cap L^{\infty}(\R)$. Let $u_{\eps}$ be the unique solution of 
\eqref{eq:nonlocal_PDE_one_dimension} and let $u$ be the unique solution of \eqref{eq:local_PDE} with initial condition $u^0$. Then, there exists a constant $C = C(u^0, T, W)$ such that for all $t \in [0,T]$
\vspace{-0.2em}
\begin{equation} \label{eq:bessel_rate_eps}
\mathcal{W}_2(u_{\eps}(t,\cdot), u(t,\cdot)) \leq C\, \sqrt{\eps}.
\end{equation}
\end{thm}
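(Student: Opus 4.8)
The plan is to view both evolutions as $2$-Wasserstein gradient flows and to combine their Evolutionary Variational Inequalities. Equation \eqref{eq:local_PDE} is the Wasserstein gradient flow of $\mathcal{E}[\rho]:=\tfrac12\int_{\R}\rho^{2}\diff x$, which is convex along generalized geodesics (McCann's condition, trivially satisfied in $d=1$), so $u$ satisfies $\mathrm{EVI}_{0}$. Equation \eqref{eq:nonlocal_PDE_one_dimension} is the Wasserstein gradient flow of the interaction energy $\mathcal{G}_{\eps}[\rho]:=\tfrac12\iint_{\R^{2}}W_{\eps}(x-y)\,\rho(x)\,\rho(y)\diff x\diff y$. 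The kernel $W_{\eps}(x)=\tfrac{1}{2\eps}e^{-|x|/\eps}$ is not convex on $\R$, but its radial profile $r\mapsto\tfrac{1}{2\eps}e^{-r/\eps}$ is convex on $[0,\infty)$; since optimal maps on the line are monotone, the second variation of $\mathcal{G}_{\eps}$ along a geodesic $t\mapsto(T_{s})_{\#}\rho_{0}$, $T_{s}=(1-s)\mathrm{id}+sT$, between absolutely continuous densities equals $\tfrac{1}{2\eps^{2}}\iint W_{\eps}(T_{s}(x)-T_{s}(y))\,(v(x)-v(y))^{2}\diff\rho_{0}(x)\diff\rho_{0}(y)\ge0$ with $v=T-\mathrm{id}$ — the negative Dirac mass hidden in $W_{\eps}''$ lives on the diagonal, where $v(x)-v(y)=0$, and is never felt. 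Hence $\mathcal{G}_{\eps}$ is geodesically convex and $u_{\eps}$ also satisfies $\mathrm{EVI}_{0}$; here one uses that \eqref{eq:nonlocal_PDE_one_dimension} is a degenerate diffusion, so $u_{\eps}(t,\cdot)$ stays bounded ($\norm{u_{\eps}(t,\cdot)}_{L^{\infty}}\le\norm{u^{0}}_{L^{\infty}}$) and absolutely continuous.

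\textbf{Combining the two inequalities.}
Testing the $\mathrm{EVI}_{0}$ for $u_{\eps}$ with the competitor $u(t,\cdot)$ and the $\mathrm{EVI}_{0}$ for $u$ with the competitor $u_{\eps}(t,\cdot)$, then adding the two one-sided derivatives (the standard sum rule for the time-derivative of $t\mapsto\mathcal{W}_{2}^{2}(u_{\eps}(t,\cdot),u(t,\cdot))$ along two absolutely continuous curves), one gets for a.e.\ $t\in[0,T]$
\[
\tfrac12\,\tfrac{\diff}{\diff t}\,\mathcal{W}_{2}^{2}\big(u_{\eps}(t,\cdot),u(t,\cdot)\big)\;\le\;\big[\mathcal{G}_{\eps}(u(t))-\mathcal{G}_{\eps}(u_{\eps}(t))\big]+\big[\mathcal{E}(u_{\eps}(t))-\mathcal{E}(u(t))\big]\;=\;D[u_{\eps}(t)]-D[u(t)],
\]
where $D[\rho]:=\mathcal{E}[\rho]-\mathcal{G}_{\eps}[\rho]$. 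With $q:=W_{\eps}*\rho$, so $-\eps^{2}q''+q=\rho$, a direct computation gives $D[\rho]=\tfrac12\int\rho(\rho-q)\diff x=\tfrac{\eps^{2}}{2}\norm{q'}_{L^{2}}^{2}+\tfrac{\eps^{4}}{2}\norm{q''}_{L^{2}}^{2}\ge0$. Discarding the favourable term $-D[u(t)]\le0$ leaves $\tfrac12\tfrac{\diff}{\diff t}\mathcal{W}_{2}^{2}(u_{\eps}(t,\cdot),u(t,\cdot))\le D[u_{\eps}(t)]$.

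\textbf{The a priori estimate and conclusion.}
Let $p_{\eps}:=W_{\eps}*u_{\eps}$ and $\mathcal{S}[\rho]:=\int_{\R}(\rho\ln\rho-\rho)\diff x$. Differentiating along \eqref{eq:nonlocal_PDE_one_dimension} and using $\p_x u_{\eps}=\p_x p_{\eps}-\eps^{2}\p_x^3 p_{\eps}$,
\[
\tfrac{\diff}{\diff t}\mathcal{S}[u_{\eps}(t)]=\int\ln u_{\eps}\,\p_{x}\!\big(u_{\eps}\,\p_{x}p_{\eps}\big)\diff x=-\int\p_{x}u_{\eps}\,\p_{x}p_{\eps}\diff x=-\norm{\p_{x}p_{\eps}}_{L^{2}}^{2}-\eps^{2}\norm{\p_{xx}p_{\eps}}_{L^{2}}^{2}=-\tfrac{2}{\eps^{2}}\,D[u_{\eps}(t)].
\]
Integrating the inequality of the previous step on $[0,t]$ and using $u_{\eps}(0,\cdot)=u(0,\cdot)=u^{0}$,
\[
\tfrac12\,\mathcal{W}_{2}^{2}\big(u_{\eps}(t,\cdot),u(t,\cdot)\big)\;\le\;\int_{0}^{t}D[u_{\eps}(s)]\diff s\;=\;\tfrac{\eps^{2}}{2}\big(\mathcal{S}[u^{0}]-\mathcal{S}[u_{\eps}(t)]\big).
\]
Now $\mathcal{S}[u^{0}]<\infty$ (its positive part is bounded by $\ln\norm{u^{0}}_{L^{\infty}}$), and $\mathcal{S}[u_{\eps}(t)]\ge-C\big(1+\int u_{\eps}(t)\,|x|^{2}\diff x\big)$ by comparison with a Gaussian, where $\int u_{\eps}(t)|x|^{2}\diff x$ is bounded on $[0,T]$ uniformly in $\eps$ (from $\tfrac{\diff}{\diff t}\!\int u_{\eps}|x|^{2}\le2(\int u_{\eps}|x|^{2})^{1/2}(\int u_{\eps}|\p_{x}p_{\eps}|^{2})^{1/2}$ together with $\int_{0}^{T}\!\!\int u_{\eps}|\p_{x}p_{\eps}|^{2}\le2\,\mathcal{G}_{\eps}[u^{0}]\le\norm{u^{0}}_{L^{\infty}}$). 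Hence $\mathcal{W}_{2}^{2}(u_{\eps}(t,\cdot),u(t,\cdot))\le C(u^{0},T,W)\,\eps^{2}$ for all $t\in[0,T]$, which yields \eqref{eq:bessel_rate_eps} — in fact with $\sqrt{\eps}$ improved to $\eps$ for $\eps\le1$, and trivially for $\eps>1$ after enlarging $C$.

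\textbf{Main difficulty.}
Two points carry all the weight. First, identifying $u_{\eps}$ with the $\mathrm{EVI}_{0}$ gradient flow of $\mathcal{G}_{\eps}$, i.e.\ the geodesic convexity of the interaction energy with the \emph{non-convex} kernel $W_{\eps}$: this is exactly where the restriction to one spatial dimension is genuinely used. Second, making the entropy identity rigorous — weak solutions of \eqref{eq:nonlocal_PDE_one_dimension} may vanish, so $\ln u_{\eps}$ is singular there; one runs the computation on a non-degenerate regularization of \eqref{eq:nonlocal_PDE_one_dimension} and passes to the limit, using lower semicontinuity of $\mathcal{S}$ and the sign of the dissipation so that the bound, and in particular its $\eps$-uniform constant, is inherited. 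Everything else is the elementary EVI bookkeeping above.
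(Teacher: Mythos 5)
Your proposal is correct and follows the same overall strategy as the paper: identify \eqref{eq:local_PDE} and \eqref{eq:nonlocal_PDE_one_dimension} as $0$-geodesically convex Wasserstein gradient flows of $E[\rho]=\frac12\int\rho^2$ and $E_\eps[\rho]=\frac12\int\rho\,(W_\eps\ast\rho)$, combine the two EVIs to get $\frac12\frac{\diff}{\diff t}\mathcal{W}_2^2(u_\eps,u)\le D[u_\eps]-D[u]$ with $D=E-E_\eps\ge0$, discard $-D[u]$, and control $\int_0^T D[u_\eps]\,\diff t$ by the a priori estimates. (Your second-variation argument for geodesic convexity of $E_\eps$, with the Dirac mass in $W_\eps''$ "not felt" on the diagonal, is more heuristic than the paper's cleaner splitting of the double integral over $\{x>y\}$ and $\{x\le y\}$ using monotonicity of the 1D optimal map and convexity of $W$ on each half-line, but both reach the same conclusion.)

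The one genuine divergence is the final estimate, and it is in your favour. The paper bounds $|E(u_\eps)-E_\eps(u_\eps)|=\frac12|\int u_\eps(u_\eps-W_\eps\ast u_\eps)|$ by Cauchy--Schwarz, $\le\frac12\|u_\eps\|_{L^2}\,\eps^2\|\p_{xx}W_\eps\ast u_\eps\|_{L^2}$, and then uses the uniform $L^2_{t,x}$ bound on $\eps\,\p_{xx}W_\eps\ast u_\eps$ to get $\int_0^T|E-E_\eps|(u_\eps)\,\diff t\le C\eps$, hence $\mathcal{W}_2\le C\sqrt\eps$. You instead use the exact identity (immediate from $u_\eps-W_\eps\ast u_\eps=-\eps^2\p_{xx}(W_\eps\ast u_\eps)$ and one integration by parts, or in Fourier variables)
\begin{equation*}
E(u_\eps)-E_\eps(u_\eps)=\frac{\eps^2}{2}\|\p_x W_\eps\ast u_\eps\|_{L^2}^2+\frac{\eps^4}{2}\|\p_{xx}W_\eps\ast u_\eps\|_{L^2}^2,
\end{equation*}
which is exactly $\frac{\eps^2}{2}$ times the entropy dissipation rate; by the paper's own estimates \ref{item:estim3} and \ref{item:estim4} (equivalently, your integrated entropy identity) its time integral is $\le C\eps^2$, giving $\mathcal{W}_2\le C\eps$. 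The Cauchy--Schwarz step in the paper is genuinely lossy here because $\int (W_\eps\ast u_\eps)\,\p_{xx}(W_\eps\ast u_\eps)=-\|\p_x W_\eps\ast u_\eps\|_{L^2}^2$ is $O(1)$ in time-average, not $O(\eps^{-1})$. I have checked your identity and the bookkeeping and find no error; note however that your conclusion is strictly stronger than the theorem and than what the paper claims to prove (the paper defers the rate $\eps$ to a forthcoming work using different tools), so this step deserves a fully rigorous write-up — in particular the justification of the entropy identity for possibly vanishing weak solutions, which you correctly flag and which the paper itself treats only formally. For $\eps>1$ the bound \eqref{eq:bessel_rate_eps} follows, as you say, from the uniform second-moment bounds rather than from $C\eps\le C\sqrt\eps$.
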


We present the proof in Section \ref{sect:proof_main_thm}. The two main ingredients are a priori estimates for \eqref{eq:nonlocal_PDE_one_dimension} and evolutionary variational inequality for both \eqref{eq:nonlocal_PDE_one_dimension} and \eqref{eq:local_PDE}. In Section \ref{sect:numerics} we present numerical simulations obtained via finite volume method. \\

The recent paper \cite{amassad2025deterministic} discusses the rate of convergence of the nonlocal-to-local limit in much broader generality. First, in one space dimension they are able to consider any kernel which is convex on $(-\infty,0)$ and $(0,\infty)$ including our result. Second, they are able to establish the rate in higher dimensions. While it is true that \cite{amassad2025deterministic} includes our result, the methods are substantially different and more complicated. The result of \cite{amassad2025deterministic} is based on the well-known formula for the time derivative of the Wasserstein distance \cite[Theorem 5.24]{ref:Santambrogio2015} while our work is based on the EVI formulation. The difference is also reflected by the assumptions on the initial datum: \cite{amassad2025deterministic} needs $u^0$ to be in $W^{1,\infty}(\mathbb{T})$ (the paper discusses the problem on the torus $\mathbb{T}$) while we require $L^1(\R) \cap L^{\infty}(\R)$ regularity (together with $u^0\, |x|^2 \in L^1(\R)$). In particular, we do not make any assumptions on the derivatives of $u^0$.\\

It is natural to ask whether the rate $\sqrt{\varepsilon}$, also obtained in \cite{amassad2025deterministic}, is optimal. The numerical simulations presented in Figures \ref{Fig:GaussConv} and \ref{Fig:ParabConvPBC} suggest that the rate could potentially be improved to $\varepsilon$ for small values of $\varepsilon$. In a forthcoming work \cite{art_new_formula_Wasserstein}, we confirm that this is indeed the case. The result is based on a new formula for the Wasserstein distance between two gradient flows and on Aronson-Bénilan-type estimates for the porous medium equation.

\section{Proof of Theorem~\ref{thm:bessel_rate_eps}}\label{sect:proof_main_thm}

\subsection{Uniform estimates for nonlocal equation \eqref{eq:nonlocal_PDE_one_dimension}}

In this section, for the sake of completeness, we briefly recall the following uniform bounds on \eqref{eq:nonlocal_PDE_one_dimension} obtained in \cite{Perthame-incompressible-visco,MR3260280}. 

\begin{proposition}\label{prop:apriori_estimates}
Let $u^0 \in L^1(\R)\cap L^{\infty}(\R)$, $u^0 \geq 0$, $u^0 |x|^2 \in L^1(\R)$. Then, there exists unique solution $\{u_{\eps}\}$ to \eqref{eq:nonlocal_PDE_one_dimension} with initial condition $u^0$ such that 
\begin{enumerate}[label=(\Alph*)]
    \item $\{u_{\eps}\}$ is uniformly bounded in $L^p((0,T)\times\R)$ for all $p\in [1,\infty]$,
    \item $\{u_{\eps}\, |\log(u_{\eps})|\}$, $\{u_{\eps}\,|x|^2\}$ are uniformly bounded in $L^{\infty}(0,T; L^1(\R))$,
    \item\label{item:estim3} $\{\p_x W_{\eps} \ast u_{\eps}\}$ is uniformly bounded in $L^2((0,T)\times\R)$,
    \item\label{item:estim4} $\{\eps\,\p_{xx} W_{\eps} \ast u_{\eps}\}$ is uniformly bounded in $L^2((0,T)\times\R)$,
    \item $\{\sqrt{u_{\eps}}\, |\p_x W_{\eps}\ast u_{\eps}|^2\}$ is uniformly bounded in $L^2((0,T)\times\R)$.
\end{enumerate}
\end{proposition}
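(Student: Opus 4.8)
The plan is to derive all the bounds from the gradient-flow structure of \eqref{eq:nonlocal_PDE_one_dimension}, following the standard program for aggregation/porous-medium-type equations. First I would note that \eqref{eq:nonlocal_PDE_one_dimension} is the $\mathcal W_2$-gradient flow of the interaction energy $\mathcal E_\eps[u] = \frac12 \int_{\R}\int_{\R} W_\eps(x-y)\, u(x)\, u(y)\, \diff x \diff y$, which is convenient but not strictly needed; what I really use is the dissipation identity obtained by multiplying the equation by the pressure $p_\eps := W_\eps \ast u_\eps$ and by $\log u_\eps$. Existence and uniqueness of a solution preserving mass and nonnegativity is classical for this semilinear-type continuity equation (the velocity field $\p_x W_\eps \ast u_\eps$ is bounded and Lipschitz in $x$ uniformly in time once we have an $L^\infty$ bound on $u_\eps$, which closes by Grönwall); I would cite \cite{MR3260280, Perthame-incompressible-visco} for this and focus on the a priori estimates, which is where the uniformity in $\eps$ comes from.

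The $L^\infty$ bound (A) for $p = \infty$ follows from the comparison/maximum principle: since $-\p_x(u_\eps \p_x p_\eps) = - \p_x u_\eps\, \p_x p_\eps - u_\eps\, \p_{xx} p_\eps$ and, crucially, $\eps^2 \p_{xx} p_\eps = p_\eps - u_\eps$ by \eqref{eq:elliptic_PDE} (so $\p_{xx}p_\eps = (p_\eps - u_\eps)/\eps^2$), the equation becomes $\p_t u_\eps - \p_x u_\eps \,\p_x p_\eps + \frac{1}{\eps^2} u_\eps(p_\eps - u_\eps) = 0$, i.e. a transport equation with a zeroth-order term of favourable sign wherever $u_\eps$ exceeds $p_\eps$; comparing with the constant $\|u^0\|_{L^\infty}$ (which dominates $p_\eps$ at such points since $p_\eps = W_\eps \ast u_\eps$ has the same $L^\infty$ bound by Young) gives $\|u_\eps(t)\|_{L^\infty} \le \|u^0\|_{L^\infty}$. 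Mass conservation gives the $L^1$ bound, and interpolation gives all $L^p$, $p \in [1,\infty]$. For (B): the second moment evolves by $\frac{d}{dt}\int u_\eps |x|^2 = 2\int u_\eps\, x\, \p_x p_\eps = -2\int x\, \p_x(W_\eps \ast u_\eps)\, u_\eps$, which after using the elliptic equation and integrating by parts is controlled by the mass and the energy, both bounded; the entropy $\int u_\eps |\log u_\eps|$ is controlled from above by the standard entropy inequality (the entropy dissipates against the interaction energy up to the second-moment correction for the negative part of $\log$), using that $\mathcal E_\eps[u_\eps(t)] \le \mathcal E_\eps[u^0] \le \|u^0\|_{L^\infty}\cdot\frac12\|W\|_{L^1}\cdot\|u^0\|_{L^1}$ uniformly in $\eps$.

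For the dissipation-type estimates (C)--(E), the key computation is the energy identity: multiplying \eqref{eq:nonlocal_PDE_one_dimension} by $p_\eps = W_\eps \ast u_\eps$ and integrating,
\begin{equation*}
\frac{d}{dt}\, \mathcal E_\eps[u_\eps] = -\int_{\R} u_\eps\, |\p_x p_\eps|^2 \diff x \le 0,
\end{equation*}
so that $\int_0^T \!\!\int_{\R} u_\eps |\p_x p_\eps|^2 \le \mathcal E_\eps[u^0]$ uniformly in $\eps$. Since $p_\eps \ge 0$, the elliptic identity $-\eps^2 \p_{xx}p_\eps + p_\eps = u_\eps$ tested against $p_\eps$ gives $\eps^2 \int |\p_x p_\eps|^2 + \int p_\eps^2 = \int u_\eps p_\eps \le \|u^0\|_{L^\infty}\,\|W\|_{L^1}\,\|u^0\|_{L^1}$ pointwise in $t$; integrating in time yields (C) for $\eps\,\p_x p_\eps$ in $L^2$ — but note (C) as stated is about $\p_x p_\eps$ itself, without the $\eps$, which is the genuinely one-dimensional gain: in 1D one has $\|\p_x p_\eps\|_{L^\infty(\R)} \le \|u_\eps\|_{L^\infty}$ directly from differentiating the explicit convolution $p_\eps = W_\eps \ast u_\eps$ with $\p_x W_\eps(x) = -\frac{1}{2\eps^2}\mathrm{sgn}(x)e^{-|x|/\eps}$, because $\int |\p_x W_\eps| = 1/\eps$ is \emph{not} what one uses — rather $\p_x p_\eps(x) = \int \p_x W_\eps(x-y) u_\eps(y)\diff y$ and one splits at $y=x$ to see $|\p_x p_\eps(x)| \le \|u_\eps\|_\infty \int_0^\infty \frac{1}{2\eps^2}e^{-r/\eps}\diff r \cdot 2 \le \|u_\eps\|_\infty$, hence (C) and the pointwise $L^\infty$ bound follow, and (E) follows by combining $\|\p_x p_\eps\|_\infty \le \|u^0\|_\infty$ with $\int_0^T\!\!\int u_\eps |\p_x p_\eps|^2$. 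For (D), $\eps \p_{xx}p_\eps = (p_\eps - u_\eps)/\eps$ looks dangerous, but one instead tests the elliptic equation by $-\eps^2 \p_{xx}p_\eps$: $\eps^4 \int |\p_{xx}p_\eps|^2 + \eps^2\int|\p_x p_\eps|^2 = \eps^2 \int u_\eps (-\p_{xx}p_\eps) \le \frac{\eps^4}{2}\int|\p_{xx}p_\eps|^2 + \frac12\int u_\eps^2$, giving $\eps^2\int|\p_{xx}p_\eps|^2 \le \int u_\eps^2 \le \|u^0\|_\infty \|u^0\|_{L^1}$ pointwise in $t$, hence $\eps\,\p_{xx}p_\eps \in L^2((0,T)\times\R)$ uniformly; equivalently, $\eps \p_{xx}p_\eps = \frac{1}{\eps}(p_\eps-u_\eps)$ so this says $\|p_\eps - u_\eps\|_{L^2} = O(\eps)$, which is exactly the quantitative closeness of nonlocal to local that (D) is meant to capture.

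The main obstacle is getting the $\eps$-uniformity right in every estimate: naive manipulations produce factors of $1/\eps$ or $1/\eps^2$ from $\p_x W_\eps$ or $\p_{xx}W_\eps$, and the point of each computation above is to arrange the testing so those factors cancel or combine into a bounded quantity (the energy $\mathcal E_\eps[u^0]$, or $\|u^0\|_{L^\infty}\|u^0\|_{L^1}$) — in particular the $L^\infty$ bound on $\p_x p_\eps$ in 1D, which is what makes (C) dimension-specific, and the absorption argument for (D). Since the proposition is quoted from \cite{Perthame-incompressible-visco, MR3260280} I would keep this brief, presenting the energy identity and the two elliptic testing computations and referring to those papers for the remaining details and for the existence/uniqueness.
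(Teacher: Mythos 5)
Your overall architecture (maximum principle for the $L^\infty$ bound, mass conservation, the energy identity $\frac{\diff}{\diff t}E_\eps[u_\eps]=-\int u_\eps|\p_x p_\eps|^2$, and Gr\"onwall for the second moment) matches the paper, but your derivations of (C) and (D) contain genuine errors. For (C), the claimed bound $\|\p_x p_\eps\|_{L^\infty}\le\|u_\eps\|_{L^\infty}$ is false: splitting the convolution at $y=x$ and taking absolute values gives exactly $\|u_\eps\|_{L^\infty}\int_{\R}|\p_x W_\eps| = \|u_\eps\|_{L^\infty}/\eps$ (your own integral $2\int_0^\infty\frac{1}{2\eps^2}e^{-r/\eps}\diff r$ equals $1/\eps$, not $1$). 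There is no cancellation to exploit without regularity of $u_\eps$: for $u=\mathbb{1}_{[0,1]}$ one has $\p_x(W_\eps\ast u)(0)\approx\frac{1}{2\eps}$ and $\|\p_x(W_\eps\ast u)\|_{L^2_x}\sim\eps^{-1/2}$. The uniform bound $\|\p_x(W_\eps\ast u)\|_{L^\infty}\le\mathrm{Lip}(u)$ would require $u_\eps\in W^{1,\infty}$, which is precisely the hypothesis this paper avoids (and the nonlocal equation does not regularize, as the appendix emphasizes). For (D), testing the elliptic equation against $-\eps^2\p_{xx}p_\eps$ gives $\eps^4\int|\p_{xx}p_\eps|^2\le\int u_\eps^2$, i.e.\ a bound on $\eps^2\p_{xx}p_\eps$, not on $\eps\,\p_{xx}p_\eps$ as you wrote; and the stronger pointwise-in-time version of (D) is actually false for discontinuous densities (for $u=\mathbb{1}_{[0,1]}$, $\|u-W_\eps\ast u\|_{L^2_x}\sim\sqrt\eps$, so $\eps\|\p_{xx}p_\eps\|_{L^2_x}\sim\eps^{-1/2}$). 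Consequently (E), which you obtain by multiplying the energy dissipation by the false $L^\infty$ bound, is also unjustified.

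The missing ingredient is that (C) and (D) are \emph{time-integrated} dissipation estimates coming from the entropy, not pointwise-in-time elliptic estimates. Multiplying \eqref{eq:nonlocal_PDE_one_dimension} by $\log u_\eps$ gives $\partial_t\int u_\eps\log u_\eps=\int u_\eps\,\p_{xx}(W_\eps\ast u_\eps)$, and substituting $u_\eps=W_\eps\ast u_\eps-\eps^2\p_{xx}W_\eps\ast u_\eps$ from \eqref{eq:elliptic_PDE} yields
\begin{equation*}
\partial_t\int_{\R}u_\eps\log u_\eps\diff x+\int_{\R}|\p_x W_\eps\ast u_\eps|^2\diff x+\eps^2\int_{\R}|\p_{xx}W_\eps\ast u_\eps|^2\diff x=0,
\end{equation*}
which, after controlling the negative part of the entropy by the second moment (Remark~\ref{rem:control_of_log}), delivers both (C) and (D) upon integration over $(0,T)$. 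You mention the entropy only as a quantity to be bounded for (B); its dissipation is in fact the engine behind the two estimates your argument fails to reach.
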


\begin{rem}\label{rem:control_of_log}
The assumptions on the initial datum $u^0$ imply that $\int_{\R} u^0\, |\log u^0| \diff x < \infty$. Indeed, one splits $\R$ for three sets $A_1=\{x \in \R: u^0>1\}$, $A_2=\{x \in \R: u^0 \leq e^{-|x|^2}\}$, $A_3=\{x \in \R: e^{-|x|^2} < u^0 \leq 1\}$ so that
$$
\int_{\R} u^0\, |\log u^0| \diff x \leq \int_{A_1} u^0\, (u^0 + 1) \diff x + \int_{A_2} e^{-|x|^2/2} \diff x + \int_{A_3} u^0\, |x|^2 \diff x.
$$
On the set $A_2$ we used that $|\sqrt x \, \log x|\leq 1 $ for $0\leq x\leq 1$. 
\end{rem}
\begin{proof}
The existence and uniqueness in one dimension is classical and can be proved either directly by a fixed point argument as in \cite{MR4146915} or by using geodesic convexity of the energy $E_{\eps}[\rho] = \int_{\R} \rho\ast W_{\eps} \, \rho \diff x$ on $\mathcal{P}_2(\R)$ equipped with the Wasserstein distance $\mathcal{W}_2$ cf. \cite[Proposition~11.1.4]{MR2401600}. Concerning the estimates, we first write \eqref{eq:nonlocal_PDE_one_dimension} as 
$$
\p_t u_{\eps} - \p_x u_{\eps} \, \p_x W_{\eps} \ast u_\eps - u_{\eps} \, \p_{xx} W_{\eps} \ast u_\eps = 0
$$
so that using \eqref{eq:elliptic_PDE} we get
$$
\p_t u_{\eps} =  \p_x u_{\eps} \, \p_x W_{\eps} \ast u_\eps - \frac{1}{\eps^2}( u_{\eps} - W_{\eps} \ast u_\eps). 
$$
Noting that in the point $x^* \in \R$ where $x \mapsto u_{\eps}(t,x)$ reaches maximum we have $\p_x u_{\eps}(t,x^*) = 0$ and $ u_{\eps} - W_{\eps} \ast u_\eps 
\geq 0$ we obtain $\p_t u_{\eps}(x^*) \leq 0$ and this proves the claim. This argument comes from \cite[Lemma 2.1]{MR3037041}.\\

The other estimates follow by usual energy considerations. Firstly, we multiply~\eqref{eq:nonlocal_PDE_one_dimension} by $W_{\eps}\ast u_{\eps}$ and we obtain 
\begin{equation}\label{eq:energy_dissipation_identity}
\partial_t \int_{\R^d} \f{1}{2}u_{\eps} W_{\eps}\ast u_{\eps} \diff x + \int_{\R} u_{\eps} |\p_{x}(W_{\eps}\ast u_{\eps})|^2\diff x.
\end{equation}
Next, we multiply \eqref{eq:nonlocal_PDE_one_dimension} with $|x|^2/2$ to get
\begin{align*}
\partial_t\,\frac{1}{2} \int_{\R} u_{\eps} |x|^2 \diff x = - \int_{\R} u_{\eps}\, \p_x (W_{\eps}\ast u_{\eps} )\, x \diff x \leq \left(\int_{\R} u_\eps |x|^2 \diff x \right)^{1/2} \left(\int_{\R}u_{\eps}|\p_x W_{\eps}\ast u_{\eps}|^2 \diff x\right)^{1/2},   
\end{align*}
so that using \eqref{eq:energy_dissipation_identity} and the Gronwall's inequality, we obtain a uniform bound on $\int_{\R} u_{\eps}\, |x|^2 \diff x$. Finally, multiplying \eqref{eq:nonlocal_PDE_one_dimension} with $\log u_{\eps}$ and using \eqref{eq:elliptic_PDE} we get 
$$
\partial_t \int_{\R} u_{\eps} \log u_{\eps} \diff x + \int_{\R} |\p_x W_{\eps} \ast u_{\eps}|^2 \diff x + \eps^2 \, \int_{\R} |\p_{xx} W_\eps \ast u_\eps|^2 \diff x  = 0.
$$
The negative part $\int_{\R} u_{\eps} |\log u_{\eps}|^{-} \diff x$ is easily controlled by means of the moment estimate $\int_{\R} u_{\eps}\,|x|^2 \diff x$ as in Remark \ref{rem:control_of_log}. The proof of Proposition \ref{prop:apriori_estimates} is concluded. 
\end{proof}

\subsection{The theory of gradient flows}

We recall from \cite[Theorem 11.1.4]{MR2401600} the following result characterizing gradient flows on the space of probability measures $\mathcal{P}_2(\R)$ equipped with the Wasserstein distance $\mathcal{W}_2$.

\begin{thm}\label{thm:gradient_flows}
Let $F: \mathcal{P}_2(\R) \to (-\infty, +\infty]$ be a lower semicontinuous and $\lambda$-geodesically convex functional (on $(\mathcal{P}_2(\R), \mathcal{W}_2)$). For any $\mu_0 \in \mathcal{P}_2(\R)$ there exists at most one gradient flow $\mu_t$ satisfying the initial condition $\mu_0$ characterized by the evolutionary variational inequality 
\begin{equation}\label{eq:EVI_general}
F(\mu_t) + \frac{1}{2} \frac{\diff}{\diff t} \mathcal{W}_2^2(\mu_t, \sigma) + \frac{\lambda}{2} \, \mathcal{W}_2^2(\mu_t, \sigma) \leq F(\sigma)
\end{equation}
which holds for a.e. $t$ and $\sigma \in D(F)$ (the domain of $F$).
\end{thm}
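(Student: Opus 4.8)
The plan is to prove the stronger $\lambda$-contraction estimate
$$
\mathcal{W}_2(\mu_t, \nu_t) \leq e^{-\lambda t}\, \mathcal{W}_2(\mu_0, \nu_0)
$$
between \emph{any} two curves $\mu_t, \nu_t$ satisfying the evolutionary variational inequality \eqref{eq:EVI_general}; the uniqueness claim is then the special case $\mu_0 = \nu_0$, which forces $\mathcal{W}_2(\mu_t,\nu_t)=0$. I first recall that an EVI solution is, by construction, a locally absolutely continuous curve in $(\mathcal{P}_2(\R), \mathcal{W}_2)$, so that for each fixed competitor $\sigma$ the map $t \mapsto \mathcal{W}_2^2(\mu_t, \sigma)$ is absolutely continuous and the time derivative appearing in \eqref{eq:EVI_general} is meaningful for a.e. $t$.

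The heart of the argument is a doubling of the time variable. I introduce the two-parameter function $g(t,s) := \mathcal{W}_2^2(\mu_t, \nu_s)$. Testing \eqref{eq:EVI_general} for the flow $\mu$ against the frozen competitor $\sigma = \nu_s$ gives, for a.e. $t$,
$$
\f{1}{2}\,\p_t g(t,s) + \f{\lambda}{2}\, g(t,s) \leq F(\nu_s) - F(\mu_t),
$$
while testing the EVI for $\nu$ against $\sigma = \mu_t$ gives, symmetrically, for a.e. $s$,
$$
\f{1}{2}\,\p_s g(t,s) + \f{\lambda}{2}\, g(t,s) \leq F(\mu_t) - F(\nu_s).
$$
Adding the two inequalities, the energy contributions on the right cancel exactly — this cancellation is the whole reason for assuming $\lambda$-geodesic convexity, which guarantees that \emph{both} curves are governed by the same functional $F$ — leaving the clean relation
$$
\f{1}{2}\bigl(\p_t g + \p_s g\bigr)(t,s) + \lambda\, g(t,s) \leq 0.
$$

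I would then restrict to the diagonal by setting $h(t) := g(t,t) = \mathcal{W}_2^2(\mu_t, \nu_t)$. Granting the chain rule $h'(t) = (\p_t g + \p_s g)(t,t)$ for a.e. $t$, the summed inequality becomes $h'(t) + 2\lambda\, h(t) \leq 0$, and Grönwall's lemma yields $h(t) \leq h(0)\, e^{-2\lambda t} = \mathcal{W}_2^2(\mu_0,\nu_0)\, e^{-2\lambda t}$. Taking square roots produces the contraction estimate for every sign of $\lambda$, and when $\mu_0 = \nu_0$ the nonnegativity of $h$ together with $h(0)=0$ forces $h \equiv 0$, hence $\mu_t = \nu_t$ for all $t$.

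The main obstacle is precisely the chain-rule step: the EVI only supplies the bounds on $\p_t g$ and $\p_s g$ for almost every value of one variable with the other held fixed, and the exceptional null set may depend on the frozen variable, so a priori nothing is known along the measure-zero diagonal $\{s=t\}$. The resolution, following \cite{MR2401600}, is to first establish that $g$ is \emph{jointly} locally absolutely continuous; this follows from the triangle-type Lipschitz bound
$$
\bigl|\mathcal{W}_2(\mu_t,\nu_s) - \mathcal{W}_2(\mu_{t'},\nu_{s'})\bigr| \leq \mathcal{W}_2(\mu_t,\mu_{t'}) + \mathcal{W}_2(\nu_s,\nu_{s'})
$$
combined with the absolute continuity of the two curves (so that $\mathcal{W}_2(\mu_t,\mu_{t'})$ is controlled by the integral of the metric derivative). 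Joint absolute continuity legitimises differentiation of $h$ along the diagonal and, equivalently, one may pass to the integrated form of the two inequalities and argue via a Riemann-sum discretisation in $(t,s)$, which sidesteps the pointwise issue entirely. Once this regularity is secured, the Grönwall step above closes the proof.
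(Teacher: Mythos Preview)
The paper does not prove this theorem: it is simply recalled from \cite[Theorem 11.1.4]{MR2401600} and used as a black box, so there is no in-paper argument to compare against. Your proof is correct and is exactly the standard contraction argument from that reference (the diagonal/chain-rule issue you flag is handled there via \cite[Lemma 4.3.4]{MR2401600}, just as you sketch).

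One minor conceptual slip worth noting: the cancellation of the energy terms $F(\nu_s)-F(\mu_t)$ and $F(\mu_t)-F(\nu_s)$ has nothing to do with $\lambda$-geodesic convexity. It comes purely from the fact that both curves satisfy the EVI for the \emph{same} functional $F$; the uniqueness argument you give works for any two EVI curves, irrespective of whether $F$ is convex. The $\lambda$-convexity hypothesis is what, in the Ambrosio--Gigli--Savar\'e framework, guarantees \emph{existence} of an EVI solution in the first place, not its uniqueness. You should also remark (it is a one-line consequence of \eqref{eq:EVI_general} applied with any fixed $\sigma\in D(F)$) that an EVI curve automatically satisfies $\mu_t\in D(F)$ for a.e.\ $t>0$, so that the test $\sigma=\nu_s$ is indeed admissible.
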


In the proof of Theorem \ref{thm:bessel_rate_eps} we apply Theorem \ref{thm:gradient_flows} to two functionals
$$
E[\rho] = \frac{1}{2} \int_{\R} \rho^2 \diff x, \qquad \qquad E_{\eps}[\rho] =  \frac{1}{2} \int_{\R} W_{\eps}\ast \rho \, \rho \diff x.
$$
\begin{proposition}
The functionals $E$ and $E_{\eps}$ are $\lambda$-geodesically convex on $(\mathcal{P}_2(\R), \mathcal{W}_2)$ with $\lambda = 0$.
\end{proposition}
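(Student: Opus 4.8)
The plan is to use the one-dimensional description of the Wasserstein space, in which the constant-speed geodesic between $\mu_0,\mu_1\in\mathcal{P}_2(\R)$ is obtained by linearly interpolating the monotone rearrangements. Let $X_0,X_1\in L^2(0,1)$ denote the nondecreasing rearrangements (the pseudo-inverses of the cumulative distribution functions) of $\mu_0$ and $\mu_1$; then the geodesic $(\mu_t)_{t\in[0,1]}$ is $\mu_t=(X_t)_{\#}\mathcal{L}$, where $\mathcal{L}$ is the Lebesgue measure on $(0,1)$ and $X_t:=(1-t)X_0+tX_1$, and $X_t$ is again nondecreasing (see e.g.\ \cite{ref:Santambrogio2015,MR2401600}). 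It therefore suffices to show that $t\mapsto E(\mu_t)$ and $t\mapsto E_\eps(\mu_t)$ are convex on $[0,1]$ for every such pair, which is exactly $\lambda$-geodesic convexity with $\lambda=0$.

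For the internal energy I would invoke McCann's displacement-convexity criterion: here $E[\rho]=\int_\R U(\rho)\,\diff x$ with $U(s)=s^2/2$, and in dimension one the criterion reduces to checking that $s\mapsto s\,U(1/s)=\tfrac{1}{2s}$ is convex and non-increasing on $(0,\infty)$, which is immediate. Alternatively, and without quoting this result, when $E(\mu_0)$ and $E(\mu_1)$ are finite one changes variables $x=X_t(s)$ to get $E(\mu_t)=\tfrac12\int_0^1\frac{\diff s}{(1-t)X_0'(s)+tX_1'(s)}$; the integrand is convex in $t$ because $\tau\mapsto1/\tau$ is convex on $(0,\infty)$ and the denominator is affine and positive, and integrating in $s$ preserves convexity.

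For the interaction energy I would write, using the definition of the pushforward together with the evenness of $W_\eps$,
$$E_\eps(\mu_t)=\frac12\int_0^1\!\!\int_0^1 W_\eps\big(X_t(s)-X_t(r)\big)\,\diff s\,\diff r=\iint_{0<r<s<1} W_\eps\big(X_t(s)-X_t(r)\big)\,\diff s\,\diff r .$$
For a fixed pair $r<s$, monotonicity of $X_0$ and $X_1$ gives $X_0(s)-X_0(r)\ge0$ and $X_1(s)-X_1(r)\ge0$, hence $X_t(s)-X_t(r)=(1-t)\big(X_0(s)-X_0(r)\big)+t\big(X_1(s)-X_1(r)\big)\in[0,\infty)$ for every $t\in[0,1]$. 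On $[0,\infty)$ one has $W_\eps(x)=\frac{1}{2\eps}e^{-x/\eps}$, which is smooth and convex, so $t\mapsto W_\eps\big(X_t(s)-X_t(r)\big)$ is a convex function composed with an affine map, hence convex; integrating over the triangle $\{0<r<s<1\}$ then gives convexity of $t\mapsto E_\eps(\mu_t)$.

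The point that needs care — and which I expect to be the only genuine obstacle — is that $W_\eps$ is \emph{not} convex on all of $\R$: the identity $-\eps^2W_\eps''+W_\eps=\delta_0$ exhibits a negative Dirac mass in $W_\eps''$ at the origin, so the usual implication ``$W$ $\lambda$-convex $\Rightarrow$ interaction energy $\lambda$-geodesically convex'' does not apply directly. What saves the argument is the one-dimensional fact that along a $\mathcal{W}_2$-geodesic the particles keep their order, i.e.\ $X_t$ stays monotone, so the argument of $W_\eps$ never crosses the origin and only its convex branch is ever seen. The remaining manipulations — change of variables, Fubini, passing convexity through the integral, and the routine reduction to absolutely continuous quantile functions in the internal-energy computation — are standard.
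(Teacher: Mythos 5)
Your argument is correct and takes essentially the same route as the paper: for $E_{\eps}$ both proofs split the double integral according to the order of the two points and exploit that one-dimensional $\mathcal{W}_2$-geodesics preserve order, so the argument of $W_{\eps}$ never changes sign and only the convex branches of $W_{\eps}$ are ever used — your quantile-function parametrization $X_t=(1-t)X_0+tX_1$ is just the paper's $((1-t)I+tT)\#\rho_0$ in different clothes. The only (minor) divergence is that for $E$ the paper simply cites the known displacement convexity of $\frac12\int\rho^2$, whereas you reprove it via McCann's criterion and a direct change of variables; both are standard and correct.
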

\begin{proof}
The functional $E$ is well known to be geodesically convex cf. \cite[Theorem 5.15]{MR1964483}. Concerning $E_{\eps}$, we follow the argument from \cite[Proposition 2.7]{MR2935390}. Consider $\rho_0$, $\rho_1\in \mathcal{P}_{2}(\R)$ and the geodesic $\rho_t = ((1-t) \, I + t \, T)\# \rho_0$ where $I$ is the identity map while $T$ is the optimal transport map between $\rho_0$ and $\rho_1$. We compute
$$
E_{\eps}[\rho_t] = \frac{1}{2} \int_{\R\times\R}W_\eps ((1-t)(x-y) + t(T(x)-T(y)) \diff \rho_0 (x) \diff \rho_0(y)
$$
Then we can split the integral for the sets $x>y$ and $x\leq y$. Since the transport map $T$ in one dimension is monotone (so that $x > y$ implies $T(x) \geq T(y)$) and $W$ is convex in $(-\infty,0]$ and $[0,+\infty)$ we compute
\begin{align*}
E_{\eps}[\rho_t]&\le \frac{(1-t)}{2} \int_{\R}\int_{x>y}W_{\eps}(x-y)\diff\rho_{0}(x)\diff\rho_{0}(y) + \frac{t}{2} \int_{\R}\int_{x>y}W_{\eps}(x-y)\diff\rho_{1}(x)\diff\rho_{1}(y)\\
&+ \frac{(1-t)}{2} \int_{\R}\int_{x\leq y}W_{\eps}(x-y)\diff\rho_{0}(x)\diff\rho_{0}(y) + \frac{t}{2} \int_{\R}\int_{x \leq y}W_{\eps}(x-y)\diff\rho_{1}(x)\diff\rho_{1}(y)\\
&=(1-t)E_{\eps}[\rho_0] + t E_{\eps}[\rho_1],
\end{align*}
so that $E_{\eps}$ is indeed geodesically convex.
\end{proof}

\subsection{Proof of Theorem~\ref{thm:bessel_rate_eps}}

The unique solutions of the EVI \eqref{eq:EVI_general} in Theorem \ref{thm:gradient_flows} with $F= E_{\eps}$ and $F=E$ coincide with the unique solutions of $u_{\eps}$, $u$ of \eqref{eq:nonlocal_PDE_one_dimension} and \eqref{eq:local_PDE}, see for instance~\cite[Proposition 3.28, Proposition 3.38]{MR3050280}.
By the EVI \eqref{eq:EVI_general} for \eqref{eq:nonlocal_PDE_one_dimension} we have
\begin{equation}\label{eq:EVI_eps_applied}
\frac{1}{2} \, \frac{\diff}{\diff t} \mathcal{W}_2^2(u_{\eps}(t,\cdot), u(s,\cdot)) \leq E_{\eps}(u(s,\cdot)) - E_{\eps}(u_{\eps}(t,\cdot)).
\end{equation}
Similarly, by the EVI \eqref{eq:EVI_general} for \eqref{eq:local_PDE}
\begin{equation}\label{eq:EVI_applied}
\frac{1}{2} \, \frac{\diff}{\diff t} \mathcal{W}_2^2(u(t,\cdot), u_{\eps}(s,\cdot)) \leq E(u_{\eps}(s,\cdot)) - E(u(t,\cdot)).
\end{equation}
Combining \eqref{eq:EVI_eps_applied} and \eqref{eq:EVI_applied} we get
$$
\frac{1}{2} \, \frac{\diff}{\diff t} \mathcal{W}_2^2(u_{\eps}(t,\cdot), u(t,\cdot)) \leq  E_{\eps}(u(t,\cdot)) -  E(u(t,\cdot)) + E(u_{\eps}(t,\cdot)) - E_{\eps}(u_{\eps}(t,\cdot)).  
$$
By the inequality $u(x)\,u(y) \leq \frac{1}{2}  u(x)^2 + \frac{1}{2} u(y)^2$ we have $E_{\eps}(u(t,\cdot)) -  E(u(t,\cdot)) \leq 0$. Therefore, it is sufficient to estimate $\int_0^T \left|E_{\eps}(u_{\eps}(t,\cdot)) -  E(u_{\eps}(t,\cdot))\right| \diff t$. Clearly,
$$
\left|E_{\eps}(u_{\eps}(t,\cdot)) -  E(u_{\eps}(t,\cdot))\right| = \frac{1}{2} \left| \int_{\R} u_{\eps}\left(u_{\eps} - u_{\eps}\ast W_{\eps} \right) \diff x \right| \leq \frac{1}{2} \|u_{\eps}\|_{L^2_x} \, \|u_{\eps} - u_{\eps}\ast W_{\eps}\|_{L^2_x}. 
$$
Using the elliptic PDE \eqref{eq:elliptic_PDE}, we write $u_{\eps} - u_{\eps}\ast W_{\eps} = -\eps^2 \partial_{xx} u_{\eps}\ast W_{\eps}$ so that integrating in time and using \ref{item:estim4} in Proposition \ref{prop:apriori_estimates} 
$$
\int_0^T \left|E_{\eps}(u_{\eps}(t,\cdot)) -  E(u_{\eps}(t,\cdot))\right| \diff t \leq \frac{1}{2} \|u_{\eps}\|_{L^2_{t,x}} \, \|\eps^2 \partial_{xx} u_{\eps}\ast W_{\eps}\|_{L^2_{t,x}} \leq C\, \eps.
$$
The proof is concluded.

\section{A Finite Volume scheme}\label{sect:numerics}

We propose now a numerical scheme in one dimension, based on finite volumes, to analyse further the result of Theorem \ref{thm:bessel_rate_eps}. We consider a one-dimensional domain $\Omega = (r_{1}, r_{2})$ and we divide it into $N$ cells of the form $[x_{i-\frac{1}{2}}, x_{i+\frac{1}{2}} ]$, for $i=1, \dots, N$, and for simplicity we assume that the cells have uniform size, that is $x_{i+\frac{1}{2}} - x_{i-\frac{1}{2}} = (r_2 - r_1)/N =: \Delta x$ for all $i = 1, \dots, N$. Each cell is centered at the points $x_{i}$, where $x_{i} = r_1 + (i/2) \Delta x$. 

We denote by $\barueps^i(t)$ the average of the solution $\ueps(x,t)$ over the $i$-th cell:
\[ \barueps^i(t) = \frac{1}{\Delta x} \int_{C_i} \ueps(t, x)\, \dx. \]

If we now integrate equation \eqref{eq:nonlocal_PDE_one_dimension} on each cell $C_i$ we have a system of differential equations, given by 
\begin{subequations} \label{eqs:FVschem}
\begin{equation}
\frac{d \barueps^{i}}{dt} = \frac{F(\ueps(t, x_{i+\frac{1}{2}})) - F(\ueps(t, x_{i-\frac{1}{2}}))}{\Delta x}, \quad i=1, \dots, N
\end{equation}

where the flux $F$ is given by $F(\ueps):=\ueps\, \partial_x \Weps \ast \ueps$. 

We then approximate the flux at the cell interface with functions $F_{i + \frac{1}{2}}$ and $F_{i-\frac{1}{2}}$ approximating respectively $F(\ueps(t, x_{i+\frac{1}{2}}))$ and  $F(\ueps(t, x_{i-\frac{1}{2}}))$. For constructing these approximations we use upwinding in a standard way for which the reader can refer to \cite{CarrilloChertock2015, carrillofronzoni2024}: 
\begin{equation} \label{eq:upwinding}
F_{i+\frac{1}{2}} = v_{i+\frac{1}{2}}^{+} (\ueps^i)^{\textrm{R}} + v_{i-\frac{1}{2}}^{-} (\ueps^i)^{\textrm{L}},
\end{equation}
\end{subequations}
where 
\begin{equation*}
    v_{i+\frac{1}{2}} =  \frac{\Weps \ast \ueps(x_{i+1}) - \Weps \ast \ueps(x_{i})}{\Delta x}, \quad v_{i+\frac{1}{2}}^{+}= \max(v_{i+\frac{1}{2}}, 0), \quad v_{i+\frac{1}{2}}^{-} = \min(v_{i+\frac{1}{2}}, 0). 
\end{equation*}
A corresponding expression holds for $F_{i-\frac{1}{2}}$.

For the values of $(\ueps^i)^{\mathrm{R}}$ and $(\ueps^i)^{\mathrm{L}}$ we can choose a first or second order approximation. We can define
\[ \tilde{\ueps}(x) = \barueps^i + (\ueps)_{x}^{i} (x - x_i) \quad \text{ for } x \in [x_{i-\frac{1}{2}}, x_{i+\frac{1}{2}}] \]
and we would have 
\[ 
(\ueps^i)^{\mathrm{R}} = \barueps^i + \frac{\Delta x}{2} (\ueps)_{x}^{i}, \qquad (\ueps^i)^{\mathrm{L}} = \barueps^i - \frac{\Delta x}{2} (\ueps)_{x}^{i}.  
\]
If one sets $(\ueps)_{x}^{i}=0$ for all $i=1, \dots, N$, we have a first order approximation, while if we consider a piecewise linear approximation, the scheme is  second order accurate in space by imposing nonnegativity of the density reconstruction. With this purpose, we can make a choice for a non-vanishing numerical slope
limiter guaranteeing that the reconstructed point values are nonnegative as long as the values of the solution are nonnegative. We can use a generalized minmod limiter
\[ 
(\ueps)_{x}^{i} = \text{minmod} \bigg( \theta \frac{\barueps^{i+1} -\barueps^i}{\Delta x}, \frac{\barueps^{i+1} -\barueps^{i-1}}{2 \Delta x}, \theta \frac{\barueps^{i} -\barueps^{i-1}}{\Delta x} \bigg), \]
where the parameter $\theta$ controls numerical viscosity and 
\begin{align}
	\textrm{minmod}(a,b,c) \coloneqq
	\begin{cases}
		\min \{a,b,c\} & \text{if }a,b,c>0, \\
		\max \{a,b,c\} & \text{if }a,b,c<0, \\
		0                & \text{otherwise}.
	\end{cases}
\end{align}
The term $\Weps \ast \ueps(x_{i})$ is approximated as a linear combination
\begin{equation}
\Weps \ast \ueps(x_{i}) = \sum_{j=1}^{N} \bigg( \int_{x_{j-\frac{1}{2}}}^{x_{j+\frac{1}{2}}} \frac{1}{2\varepsilon} e^{-\frac{|x_{i} - y|}{\varepsilon}} \dy\bigg) \uepsbar^{j} = (\mathcal{I} \uepsbar)_{i},
\end{equation}
where $\mathcal{I} \in \mathbb{R}^{N \times N}$ is the matrix
\begin{equation}
\mathcal{I} = \begin{pmatrix} I_{1}(x_{1}) & I_{2}(x_{1}) & \hdots & I_{N}(x_{1}) \\ I_{1}(x_{2}) & I_{2}(x_{2}) & \hdots & I_{N}(x_{2}) \\ \vdots & \vdots & \ddots & \vdots \\ I_{1}(x_{N}) & I_{2}(x_{N}) & \hdots & I_{N}(x_{N}) \end{pmatrix},
\end{equation}
with 
\begin{equation}
I_{j}(x_{i}) = \int_{x_{j-\frac{1}{2}}}^{x_{j+\frac{1}{2}}} \frac{1}{2\varepsilon} e^{-\frac{|x_{i} - y|}{\varepsilon}} \dy.
\end{equation}
Moreover, the above term can be explicitly computed as
\begin{equation*}
I_{j}(x_{i}) = \left \{ \begin{array}{ll} \frac{1}{2} \Big( e^{\frac{x_{i} - x_{j-\frac{1}{2}}}{\varepsilon}} - e^{\frac{x_{i} - x_{j+\frac{1}{2}}}{\varepsilon}} \Big) & \text{if } x_{j-\frac{1}{2}} > x_{i}, \\ \frac{1}{2} \Big( e^{-\frac{x_{i} - x_{j+\frac{1}{2}}}{\varepsilon}} - e^{-\frac{x_{i} - x_{j-\frac{1}{2}}}{\varepsilon}} \Big) & \text{if } x_{j+\frac{1}{2}} < x_{i}, \\ 1 - e^{-\frac{\Dx}{2 \varepsilon}} & \text{if } i = j. \end{array} \right.
\end{equation*}

\begin{rem}
Notice that for fixed $\Delta x$ as $\varepsilon \to 0$ the matrix $\mathcal{I}$ tends to the identity matrix, making the scheme converging to the scheme of the standard porous medium equation with exponent 2. Therefore, from the numerical point of view, we recover the switch from nonlocal (for larger values of $\varepsilon$ giving a full matrix $\mathcal{I}$) to local (for $\varepsilon$ approaching zero, the sparsity of $\mathcal{I}$ increases up to reaching the identity). 
\end{rem}

\begin{rem}
The scheme \eqref{eqs:FVschem} has to be paired with some boundary conditions. In our experiments we will use either a no-flux boundary condition or a periodic boundary condition.
The no-flux boundary condition imposes a vanishing flux at the endpoints of the one-dimensional domain
    $F_{i-\frac{1}{2}} = F_{N+\frac{1}{2}} = 0$, while the periodic boundary condition imposes $F_{i-\frac{1}{2}} = F_{N+\frac{1}{2}}$. We will see later that the choice of the type of boundary condition is crucial for the rate of convergence in $\varepsilon$. 
\end{rem}

\begin{rem}
    In our numerical experiments, in addition to the finite volume scheme, that we have summarised above for equation \eqref{eq:nonlocal_PDE_one_dimension}, we will also consider a finite volume scheme to the self-similar Fokker-Planck equation, associated to it. In particular we will consider the Fokker-Planck equation 
    \begin{equation}\label{eq:local_PDE_FoPla}
\p_tu - \p_x(u \, \p_x u) - \partial_x(x u) = 0,
\end{equation}
    together with its nonlocal approximation, given by 
    \begin{equation} \label{eq:nonlocal_PDE_one_dimension_FoPla}
        \partial_t \ueps - \partial_x (\ueps \partial_x W_\eps \ast \ueps) - \partial_x(x \ueps) = 0.
    \end{equation}
    The numerical scheme for \eqref{eq:nonlocal_PDE_one_dimension_FoPla} can be easily written as \eqref{eqs:FVschem}, where the flux terms $F_{i\pm \frac{1}{2}}$ includes an additional term, that takes into account the source of flux due to the Fokker-Planck term $\partial_x(x\ueps)$, see  \cite{CarrilloChertock2015} for further details. 
\end{rem}

\begin{rem}

A consideration on the relationship between $\Delta x$ and $\varepsilon$ has to be made. In fact we will use the scheme to check the order of convergence in estimate \eqref{eq:bessel_rate_eps}. In order to get numerically the correct order of convergence for $\varepsilon$ small, one should pick the size of the cells smaller or at least equal to the value of $\varepsilon$: $\Delta x \leq \varepsilon$. The intuition behind this requirement is sketched in Figure \ref{Fig:sizeEps}. The parameter $\varepsilon$ controls the level of nonlocal interaction in equation \eqref{eq:nonlocal_PDE_one_dimension} and as it decreases towards zero the non-locality diminishes its range, meaning that \eqref{eq:nonlocal_PDE_one_dimension} approaches the local problem \eqref{eq:local_PDE}. Therefore, for smaller values of $\varepsilon$, to ensure that we still detect the nonlocal interactions through the finite volume scheme \eqref{eqs:FVschem}, the size of the cells must have at least the same size of $\varepsilon$.

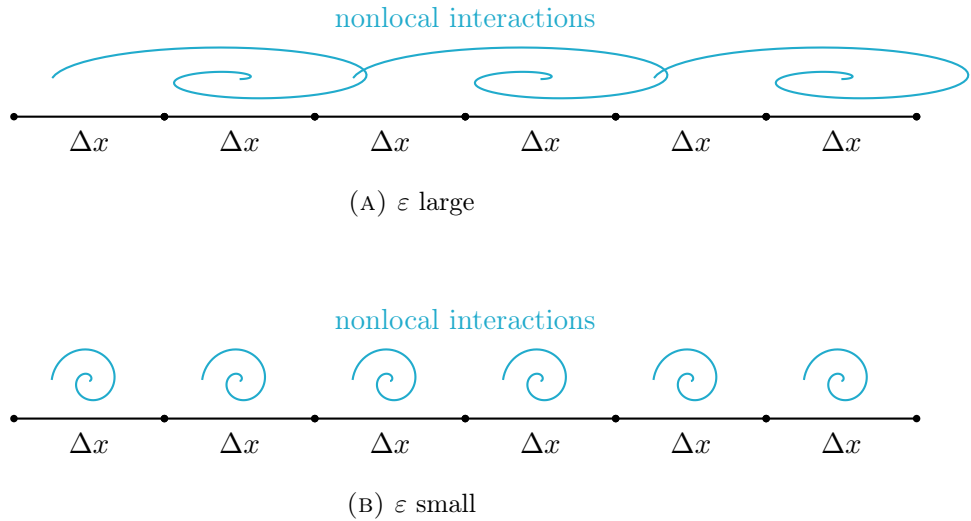
\begin{figure}[H]
\begin{subfigure}{0.7 \textwidth}
\begin{tikzpicture}
\def\lineLength{12}  %
\def\segments{6}     %
\def\spiralLoops{3}  %
\def\spiralHeight{1} %

\def\segmentLength{\lineLength/\segments}

\draw[thick] (0,0) -- (\lineLength, 0);

\node[above, color=ballblue, outer sep=2pt,fill=white] at (6, 1) {nonlocal interactions};

\foreach \i in {1,...,\segments} {
    \def\startX{\segmentLength*(\i-1/2)}
    \def\endX{\segmentLength*\i}

    \node[fill=black, circle, inner sep=1pt] at ({\startX-1}, 0) {}; %
    \node[fill=black, circle, inner sep=1pt] at ({\endX}, 0) {};   %
    \node[below, outer sep=2pt,fill=white] at ({\startX}, 0) {$\Delta x$};
}

\foreach \i in {2, 4, ...,\segments} {
    \def\startX{\segmentLength*(\i-1/2)}
    \def\endX{\segmentLength*\i}

    \node[fill=black, circle, inner sep=1pt] at ({\startX-1}, 0) {}; %
    \node[fill=black, circle, inner sep=1pt] at ({\endX}, 0) {};   %

    \draw[thick, color=ballblue, domain=0:0.5, samples=100, variable=\t] plot[parametric]
        ({\startX + 5 * \segmentLength * 0.5 * \t * cos(360 * \spiralLoops * \t)}, 
         {-0.5 + \spiralHeight + sin(360 * \spiralLoops * \t) * \segmentLength * 0.5 * \t});

}

\end{tikzpicture}
\caption{$\varepsilon$ large}
\end{subfigure}

\begin{subfigure}{0.7 \textwidth}
\begin{tikzpicture}
\def\lineLength{12}  %
\def\segments{6}     %
\def\spiralLoops{3}  %
\def\spiralHeight{1} %

\def\segmentLength{\lineLength/\segments}

\draw[thick] (0,-1) -- (\lineLength, -1);

\node[above, color=ballblue, outer sep=2pt,fill=white] at (6, 0) {nonlocal interactions};

\node[fill=white, circle, inner sep=1pt] at (0, 1.5) {};   %

\foreach \i in {1,...,\segments} {
    \def\startX{\segmentLength*(\i-1/2)}
    \def\endX{\segmentLength*\i}

    \node[fill=black, circle, inner sep=1pt] at ({\startX-1}, -1) {}; %
    \node[fill=black, circle, inner sep=1pt] at ({\endX}, -1) {};   %
    \node[below, outer sep=2pt,fill=white] at ({\startX}, -1) {$\Delta x$};

    \draw[thick, color=ballblue, domain=0:0.5, samples=100, variable=\t] plot[parametric]
        ({\startX + \segmentLength * 0.5 * \t * cos(360 * \spiralLoops * \t)}, 
         {-1.5 + \spiralHeight + sin(360 * \spiralLoops * \t) * \segmentLength * 0.5 * \t});
}
\end{tikzpicture}
\caption{$\varepsilon$ small}
\end{subfigure}
\caption{Finite Volume scheme: size of cells and $\varepsilon$}
\label{Fig:sizeEps}
\end{figure}
\end{rem}

\section{Numerical experiments}
We use the scheme \eqref{eqs:FVschem} to analyse the estimate of Theorem \ref{thm:bessel_rate_eps}. The scheme is implemented on one-dimensional domains with varying initial and boundary conditions, allowing us to investigate various aspects of convergence as well as the properties of the equation \eqref{eq:nonlocal_PDE_one_dimension}. In the following, we will denote by $u_0$ the solution $u$ of the local models \eqref{eq:local_PDE} and \eqref{eq:local_PDE_FoPla}.

\subsection{Sharpness of the error estimate between the local and the nonlocal problem}

In Figure \ref{Fig:GaussConv}  we report for different times the convergence results in Wasserstein distance $\mathcal{W}_2$ between the numerical approximations of $u_0$ and $\ueps$, solutions of \eqref{eq:local_PDE} and \eqref{eq:nonlocal_PDE_one_dimension} respectively, using the finite volume scheme \eqref{eqs:FVschem}. 
The simulations in Figure \ref{Fig:GaussConv}  for different values of $\varepsilon$ start all with a Gaussian initial datum $u^0 = \frac{1}{\sqrt{2\pi}} e^{-|x|^2/2}$. These experiments should resemble the behaviour of the equations on the whole of $\mathbb{R}$. The convergence plot validates the result of Theorem \ref{thm:bessel_rate_eps}. In particular, we first observe that the estimate, provided by \eqref{eq:bessel_rate_eps}, is global in $\varepsilon$, meaning its proof imposes no restrictions on the values of $\varepsilon$. Figure \ref{Fig:GaussConv} shows how the rate of convergence for $\mathcal{W}_2(\ueps, u_0)$, with this specific initial datum, is linear in $\varepsilon$ for small values of $\varepsilon$, while for larger values of $\varepsilon$ there is a transition and the convergence deteriorates behaving more like $\varepsilon^{\beta}$ for $\beta < 1/4$. We have clearly that $
    \varepsilon < \sqrt{\varepsilon} < \varepsilon^{\frac{1}{4}}
$
for $\varepsilon < 1$, and $\varepsilon^{\beta} < \varepsilon^{\frac{1}{4}} < \sqrt{\varepsilon} < \varepsilon$ for $\varepsilon > 1$. Therefore, the numerical results provided by our finite volume scheme demonstrate that the estimate of \eqref{eq:bessel_rate_eps} is in fact the best, \textit{global} in $\varepsilon$ estimate, while the rate of $\eps$ from \cite{art_new_formula_Wasserstein} is valid for small values of $\varepsilon$.

\subsection{Periodic boundary conditions}

In Figure \ref{Fig:ParabConvPBC}, we test the case of periodic boundary conditions, starting from a compactly supported initial datum, $u^0 = (1-|x|^2)_+$. The experiment resembles the behaviour that equation \eqref{eq:nonlocal_PDE_one_dimension} has on the whole of $\mathbb{R}$ while the support does not touch the boundary of the domain. Once the support reaches the boundary we see the expected behavior converging towards the constant solution. We observe the expected rate of convergence for $\mathcal{W}_2(\ueps, u_0)$, similar to the experiments of Figure \ref{Fig:GaussConv} during the whole evolution despite the periodic boundary conditions. The order of convergence remains indeed linear in $\varepsilon$ for small values of $\varepsilon$ for all times of the simulation. These numerical experiments corroborate again the results obtained in \cite{art_new_formula_Wasserstein}.

\begin{figure}[H]
\centering
\begin{subfigure}{0.75 \textwidth}
\includegraphics[width = \textwidth]{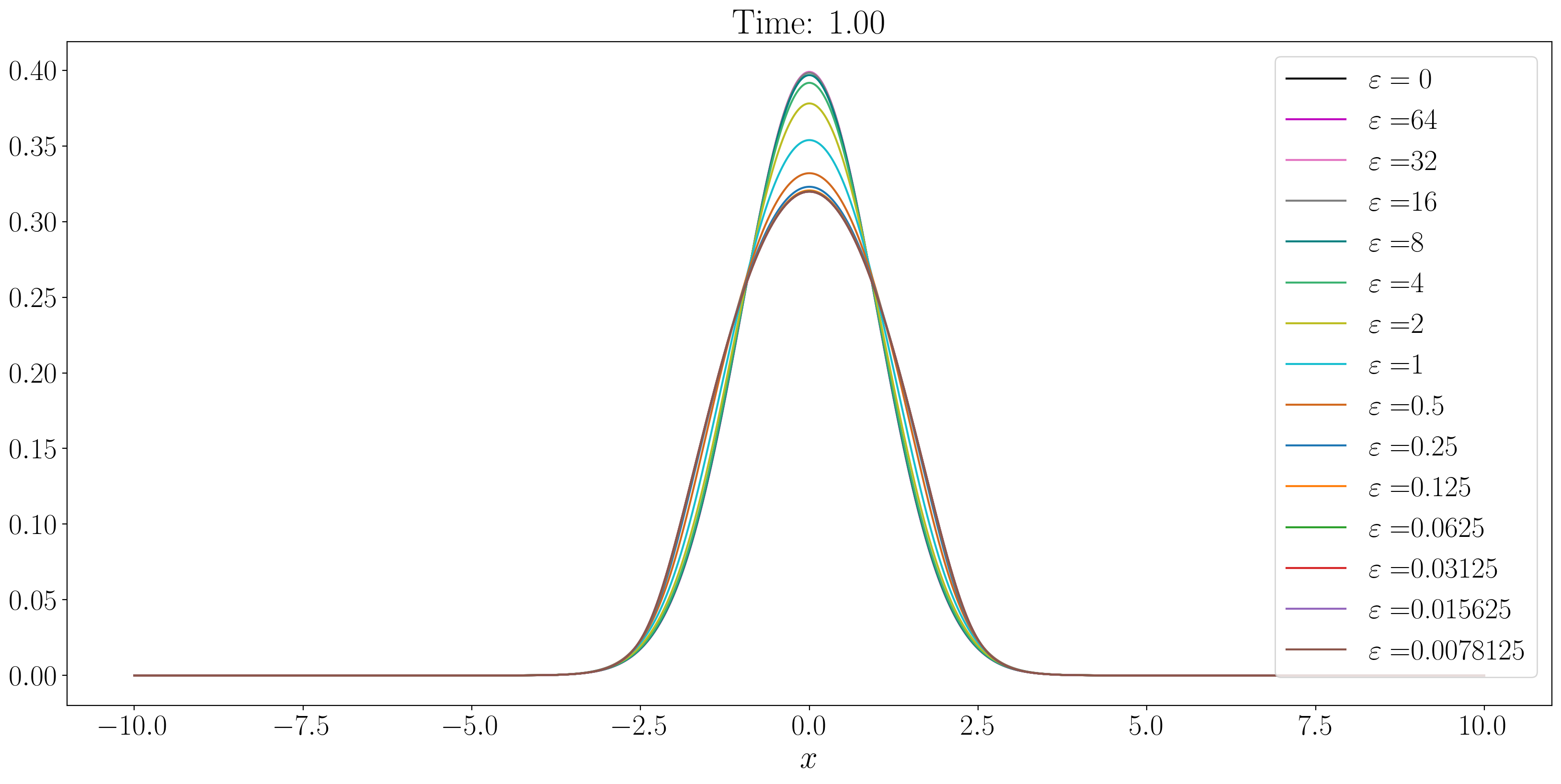} 
\end{subfigure}
\hfill
\centering
\begin{subfigure}{0.75 \textwidth}
\centering
\includegraphics[width = \textwidth]{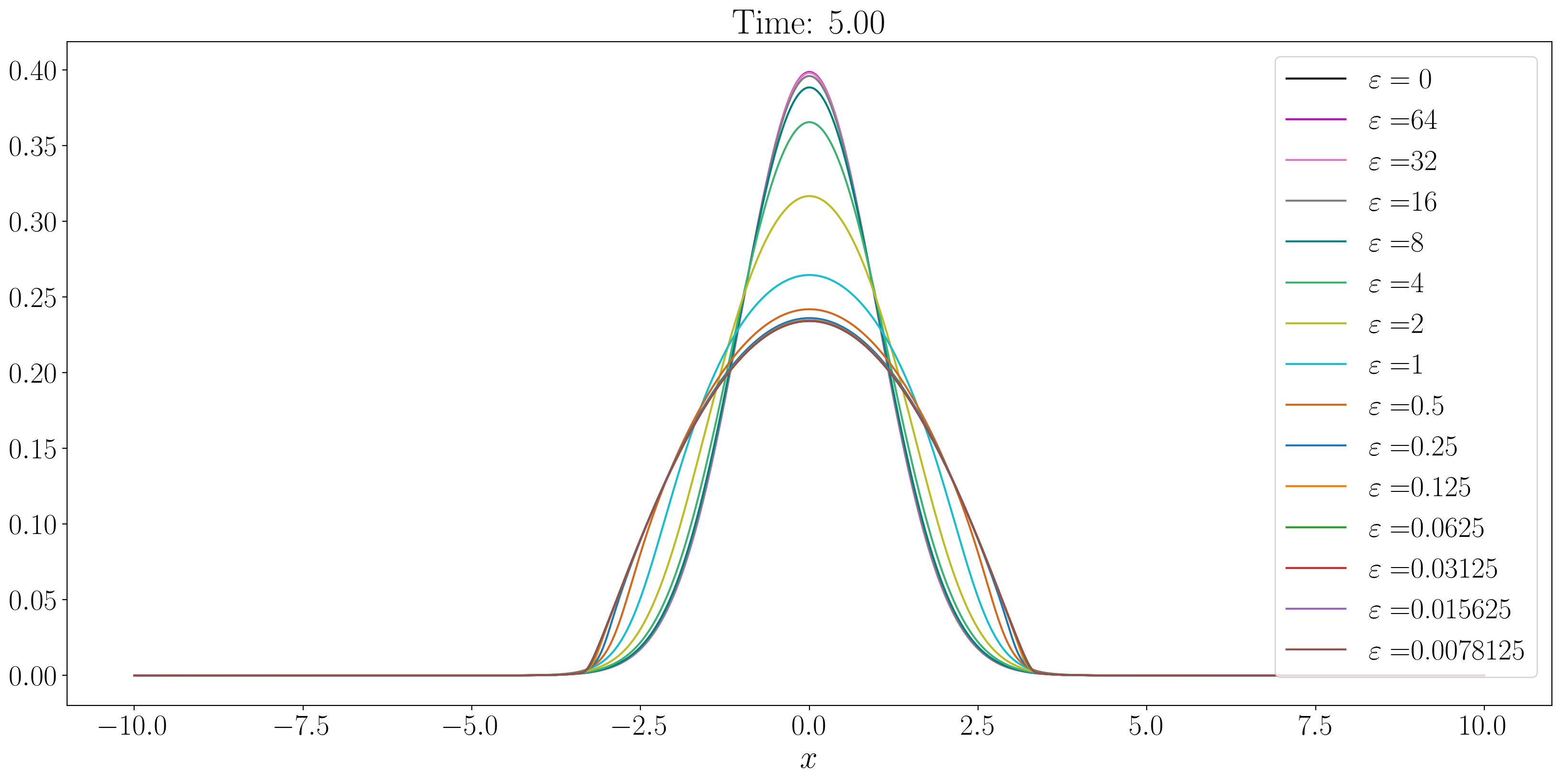} 
\end{subfigure}
\hfill
\centering
\begin{subfigure}{0.75 \textwidth}
\centering
\includegraphics[width =\textwidth]{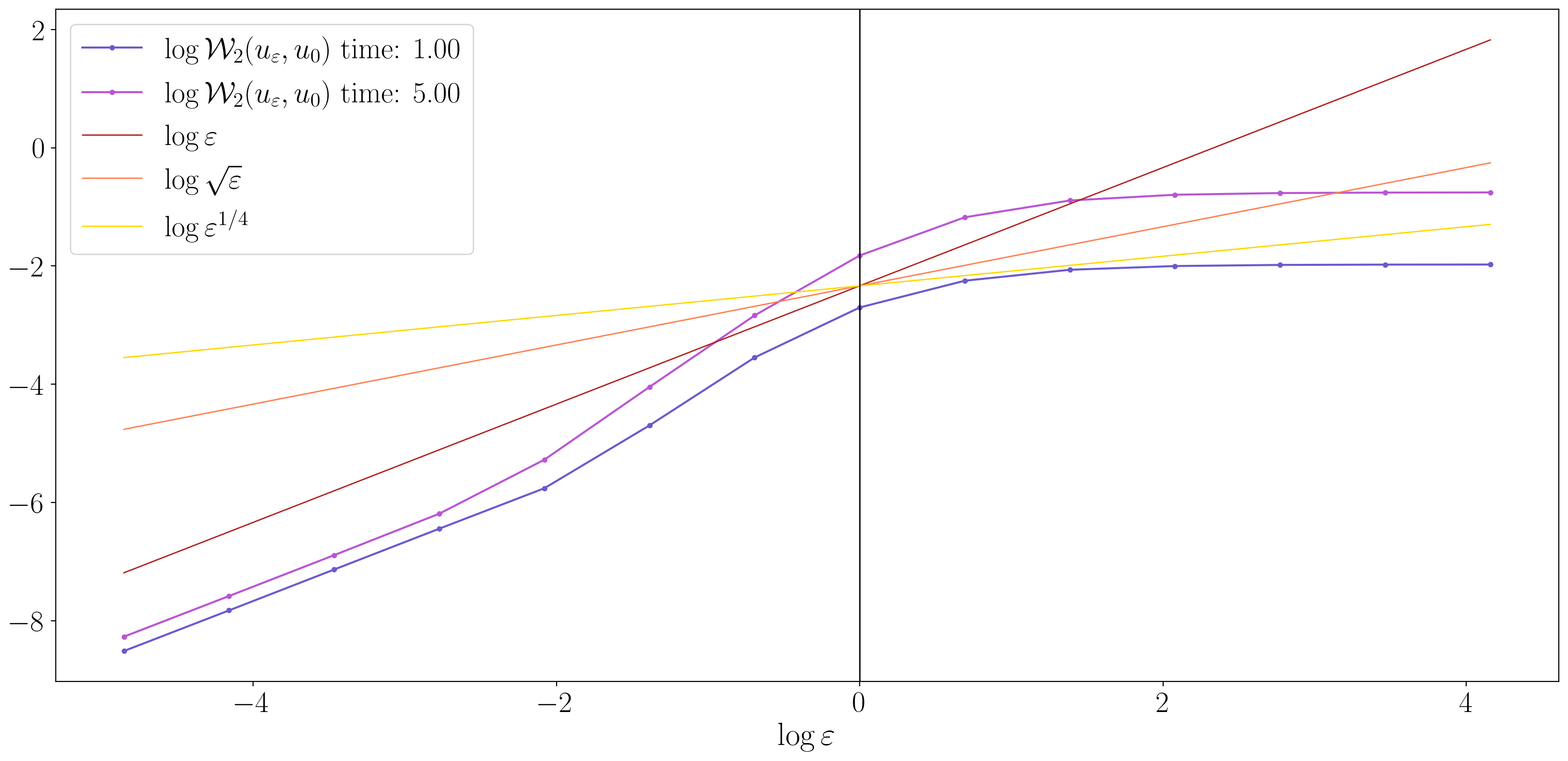} 
\end{subfigure}
\caption{Porous medium equation versus blob method: order of convergence for $\mathcal{W}_2(\ueps, u_0)$, $u_0$ solution of \eqref{eq:local_PDE} and $\ueps$ solution of \eqref{eq:nonlocal_PDE_one_dimension}. The computational domain is $\Omega = (-10, 10)$,  $u^0(x) = (1/\sqrt{2\pi}) \exp(-|x|^2/2)$ is the initial datum, a uniform grid of $N=2^{12}$ cells was used with  $\Delta t = 0.01$.}
\label{Fig:GaussConv}
\end{figure}

\begin{figure}[H]
\centering
\begin{subfigure}{0.75 \textwidth}
\includegraphics[width = \textwidth]{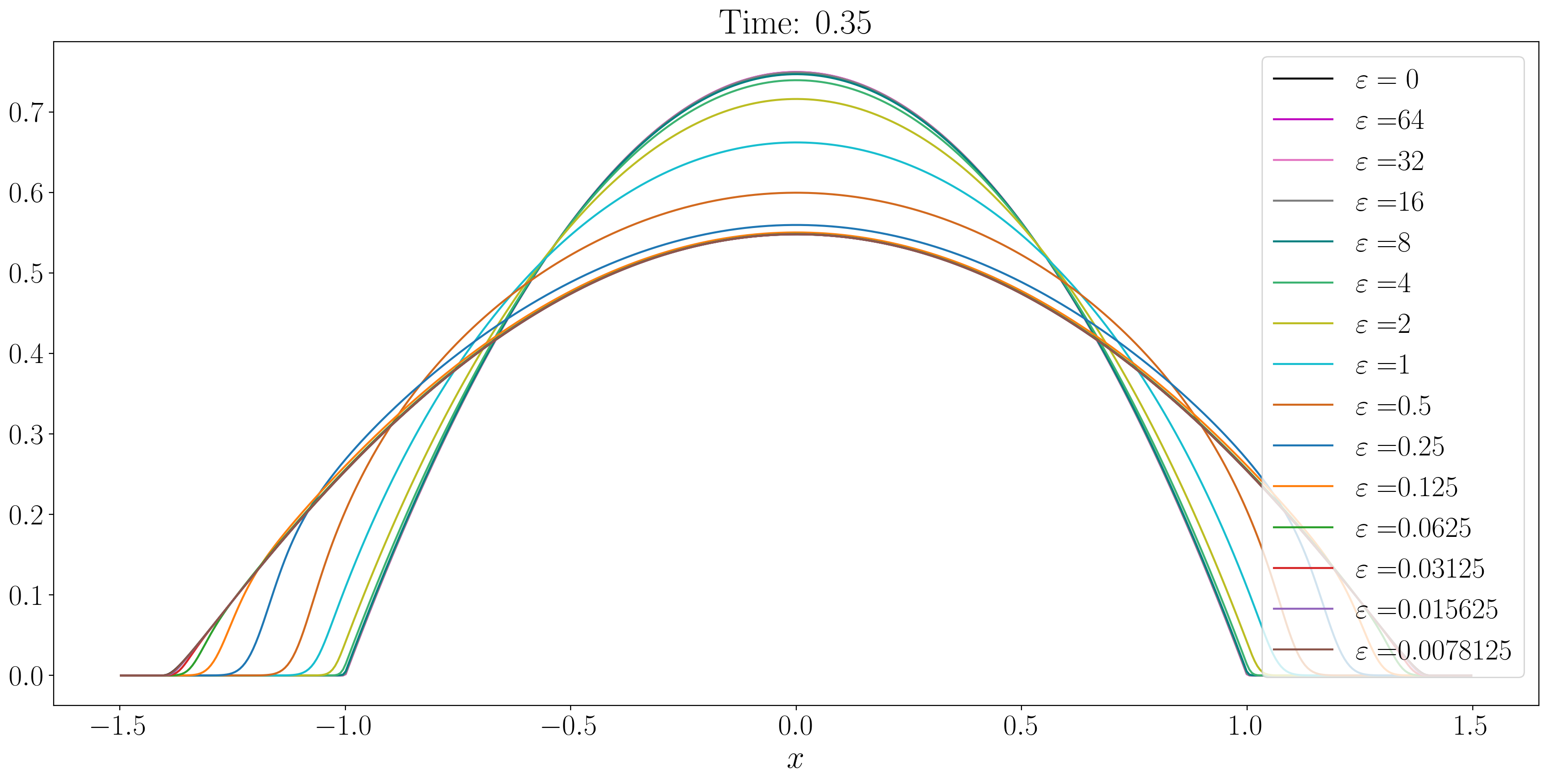} 
\end{subfigure}
\hfill
\centering
\begin{subfigure}{0.75 \textwidth}
\centering
\includegraphics[width = \textwidth]{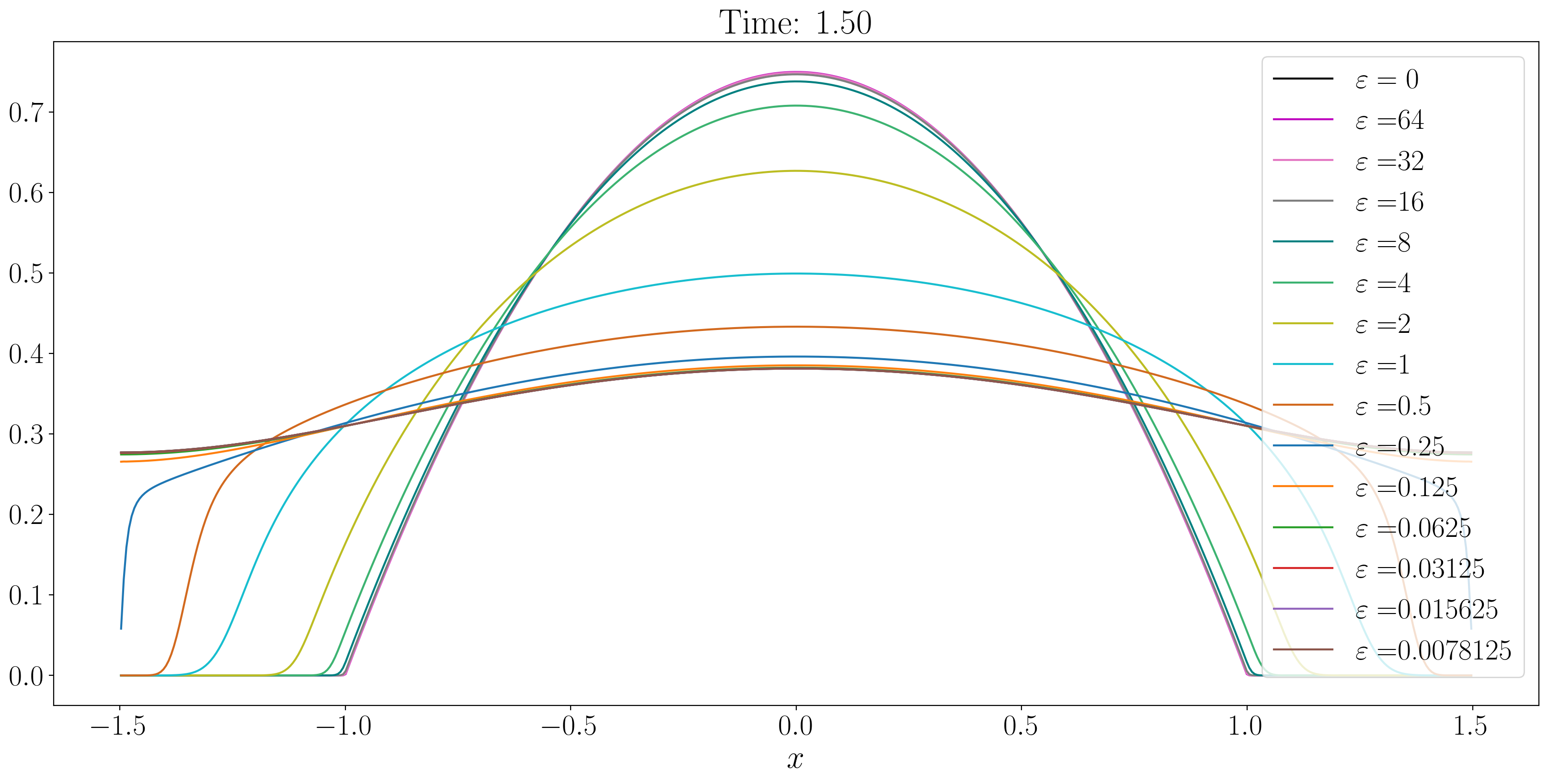} 
\end{subfigure}
\hfill
\centering
\begin{subfigure}{0.75 \textwidth}
\centering
\includegraphics[width =\textwidth]{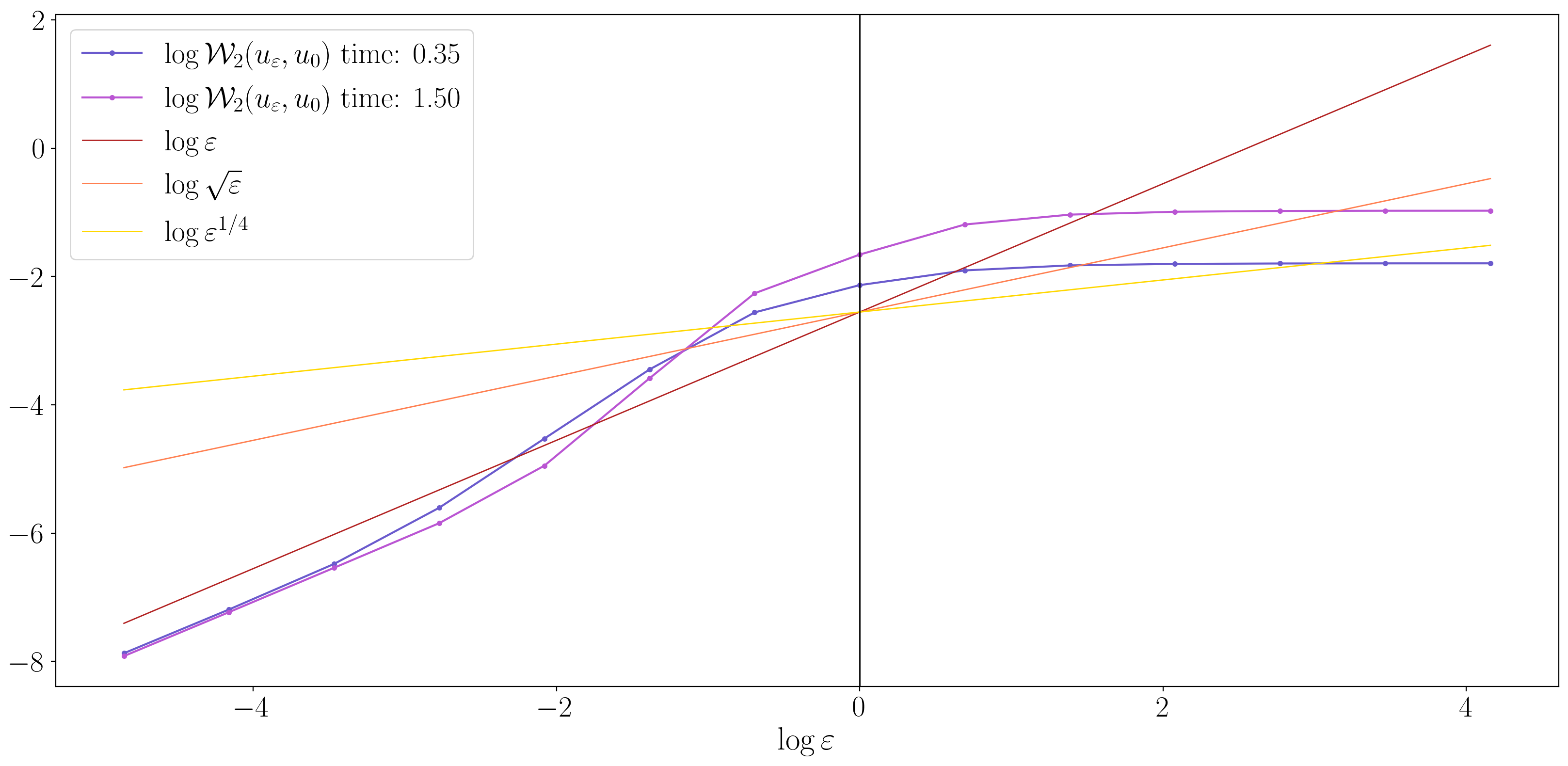} 
\end{subfigure}
\caption{Porous medium equation and blob method: order of convergence for $\mathcal{W}_2(\ueps, u_0)$, $u_0$ solution of \eqref{eq:local_PDE} and $\ueps$ solution of \eqref{eq:nonlocal_PDE_one_dimension}. The computational domain $\Omega = (-3, 3)$,  $u^0(x) = (1-x^2)_+$ is the initial datum, a uniform grid of $N=2^{10}$ cells with  $\Delta t = 0.01$ was used, periodic boundary conditions were imposed.}
\label{Fig:ParabConvPBC}
\end{figure}

\subsection{No flux boundary conditions}
In Figure \ref{Fig:ParabConvNoFlux} we test on a bounded domain the influence of the boundary on the convergence rate for $\mathcal{W}_2(\ueps, u_0)$, where $u_0$ and $\ueps$ solve respectively \eqref{eq:local_PDE} and \eqref{eq:nonlocal_PDE_one_dimension}. Starting from the same compactly supported initial datum, that we used in the experiments in Figure \ref{Fig:ParabConvPBC}, we observe how the order of convergence for $\varepsilon$ small is clearly linear in $\varepsilon$ initially, as Figure \ref{Fig:ParabConvNoFlux} displays. However, after the solutions touch the boundary, and the density starts to accumulate at the boundary, one can observe a decrease of the order of convergence, that becomes approximately proportional to $\sqrt{\varepsilon}$, as reported in Figure \ref{Fig:ParabConvNoFlux}. This experiment empirically highlights how the order of convergence may vary with respect to the case of the whole space, precisely due to the presence of the no-flux boundary conditions in contrast to \cite{art_new_formula_Wasserstein}, and how the interaction with the boundary might generally slow down convergence. 
In  addition, we can notice how, imposing no-flux boundary conditions, lead to the accumulation of the density at the boundary points because the repulsive kernel pushes the density to concentrate at the boundary. The final steady states for $\ueps$  get closer to the constant function as $\varepsilon\to 0$, except for the two boundary spikes, as one can observe in Figure \ref{Fig:ParabConvNoFlux} for intermediate times. 

\subsection{Fokker-Planck problem and the effect of a bounded domain}

In Figure \ref{Fig:UnifConv} we check the convergence for the analogous Fokker-Planck problem \eqref{eq:nonlocal_PDE_one_dimension_FoPla}. All the simulations start in this case from a uniform distribution $u^0 = \frac{1}{|\Omega|} + 10$. Despite the support of the initial datum being the whole bounded domain $\Omega$, the evolution converges towards the expected stationary state of the Fokker-Planck equation on the whole of $\mathbb{R}$ emulating the behavior on the whole line.  We can still observe how the convergence follows a rate proportional $\varepsilon^{\beta}$ with $\beta<1/4$ for $\varepsilon \gg 1$, while, in particular for small times, the convergence for $\varepsilon \ll 1$ follows a rate of $\sqrt{\varepsilon}$, with a transition, where it becomes almost linear for $\varepsilon$ close to 1. For larger times the behaviour of $\mathcal{W}_2(\ueps, u_0 )$ seems to improve in terms of convergence, keeping however $\sqrt{\varepsilon}$ as convergence rate for the smallest values. This experiment not only validates the estimate \eqref{eq:bessel_rate_eps}, but it also show how the estimate is actually sharp for some initial data. 
The interaction with the boundary and the boundedness of the domain turns then to be decisive for obtaining the rate $\sqrt{\varepsilon}$, as in the previous subsection.

\begin{figure}[H]
\centering
\begin{subfigure}{0.75 \textwidth}
\includegraphics[width = \textwidth]{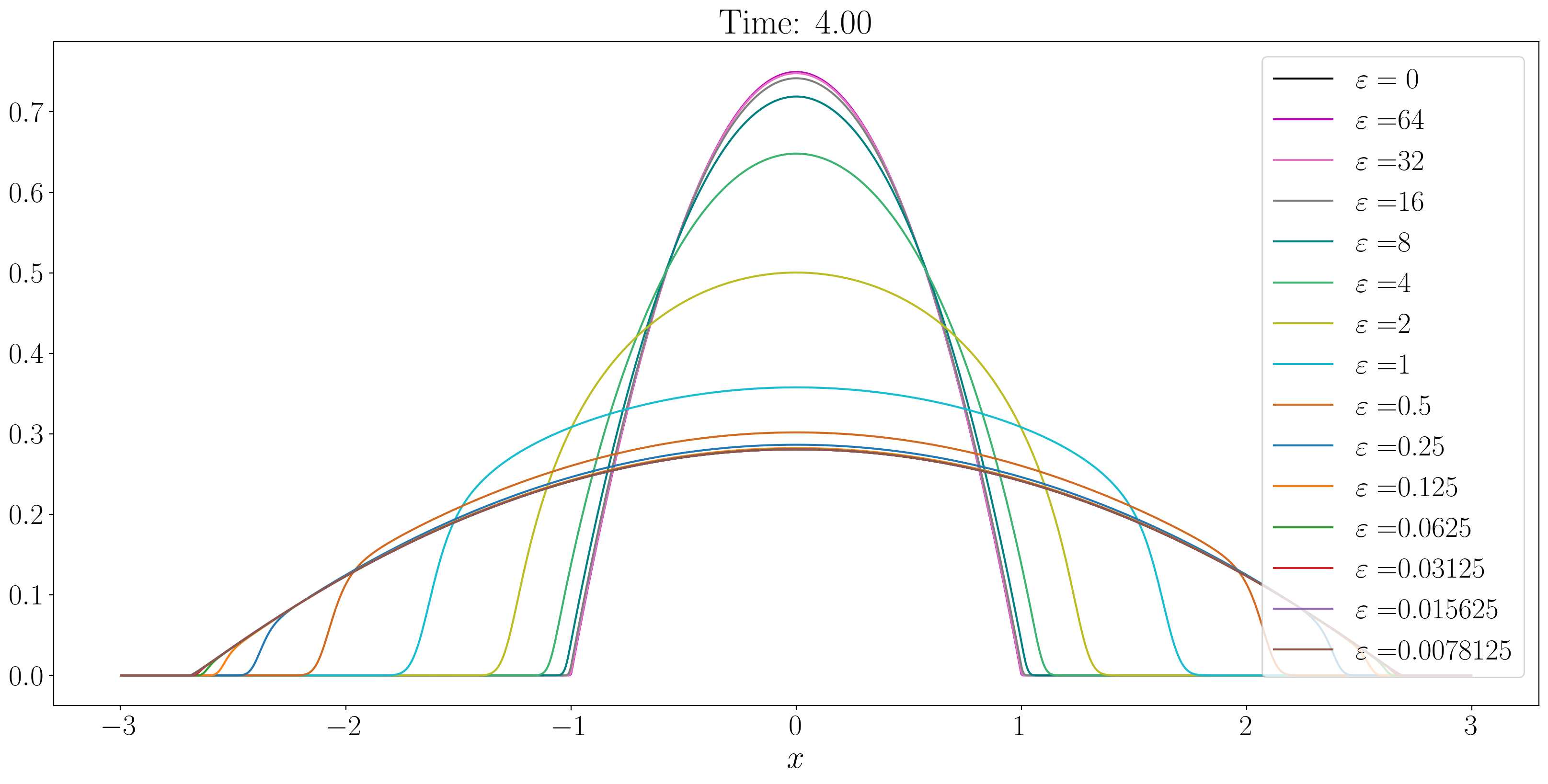} 
\end{subfigure}
\hfill
\centering
\begin{subfigure}{0.75 \textwidth}
\centering
\includegraphics[width = \textwidth]{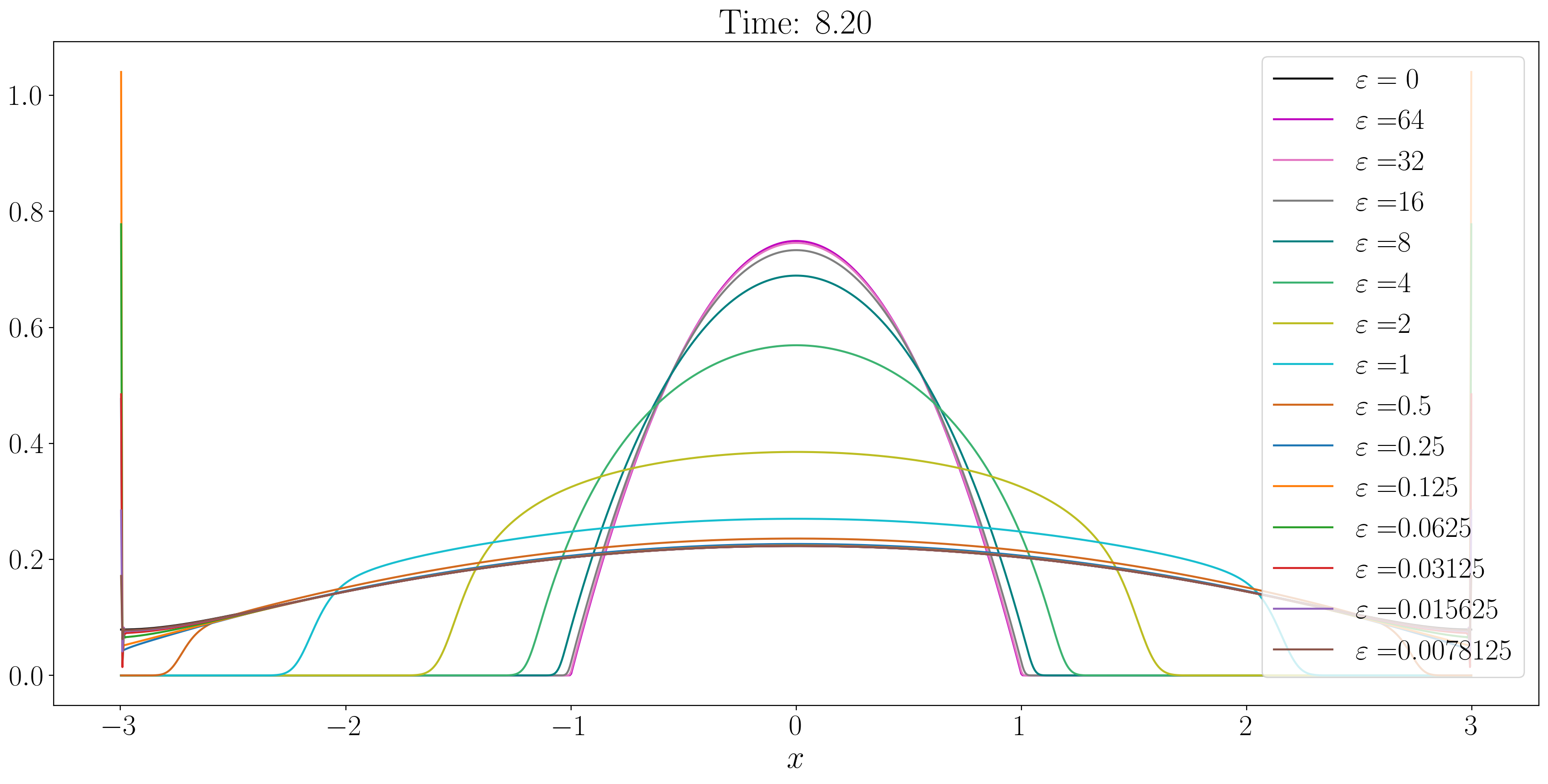} 
\end{subfigure}
\hfill
\centering
\begin{subfigure}{0.75 \textwidth}
\centering
\includegraphics[width =\textwidth]{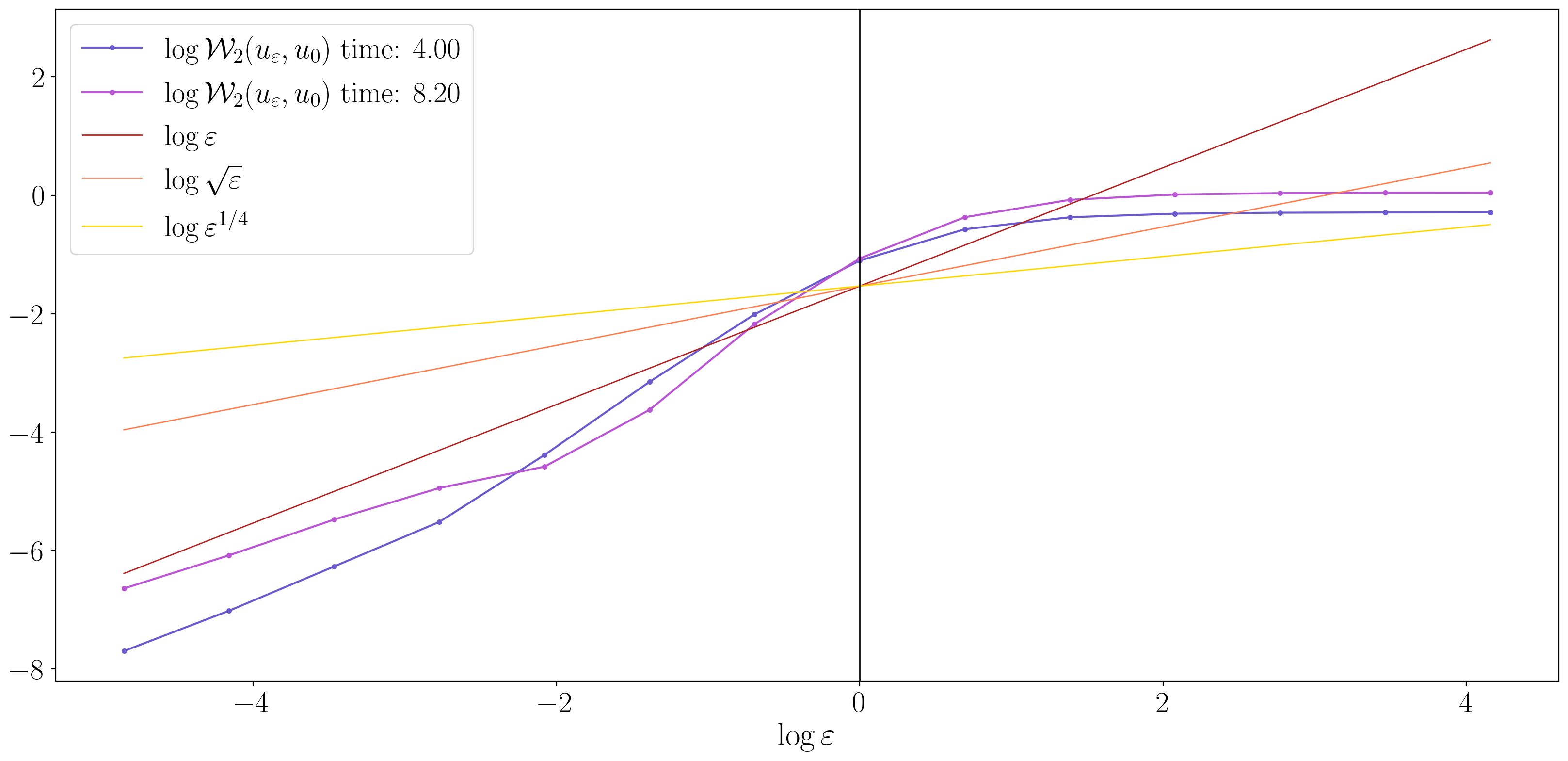} 
\end{subfigure}
\caption{Porous medium equation and blob method: order of convergence for $\mathcal{W}_2(\ueps, u_0)$, $u_0$ solution of \eqref{eq:local_PDE} and $\ueps$ solution of \eqref{eq:nonlocal_PDE_one_dimension}. The computational domain is $\Omega = (-3, 3)$,  $u^0(x) = (1-x^2)_+$ is the initial datum, a uniform grid of $N=2^{10}$ cells was used with $\Delta t = 0.01$, no-flux boundary conditions were imposed.}
\label{Fig:ParabConvNoFlux}
\end{figure}

\begin{figure}[H]
\centering
\begin{subfigure}{0.75 \textwidth}
\includegraphics[width = \textwidth]{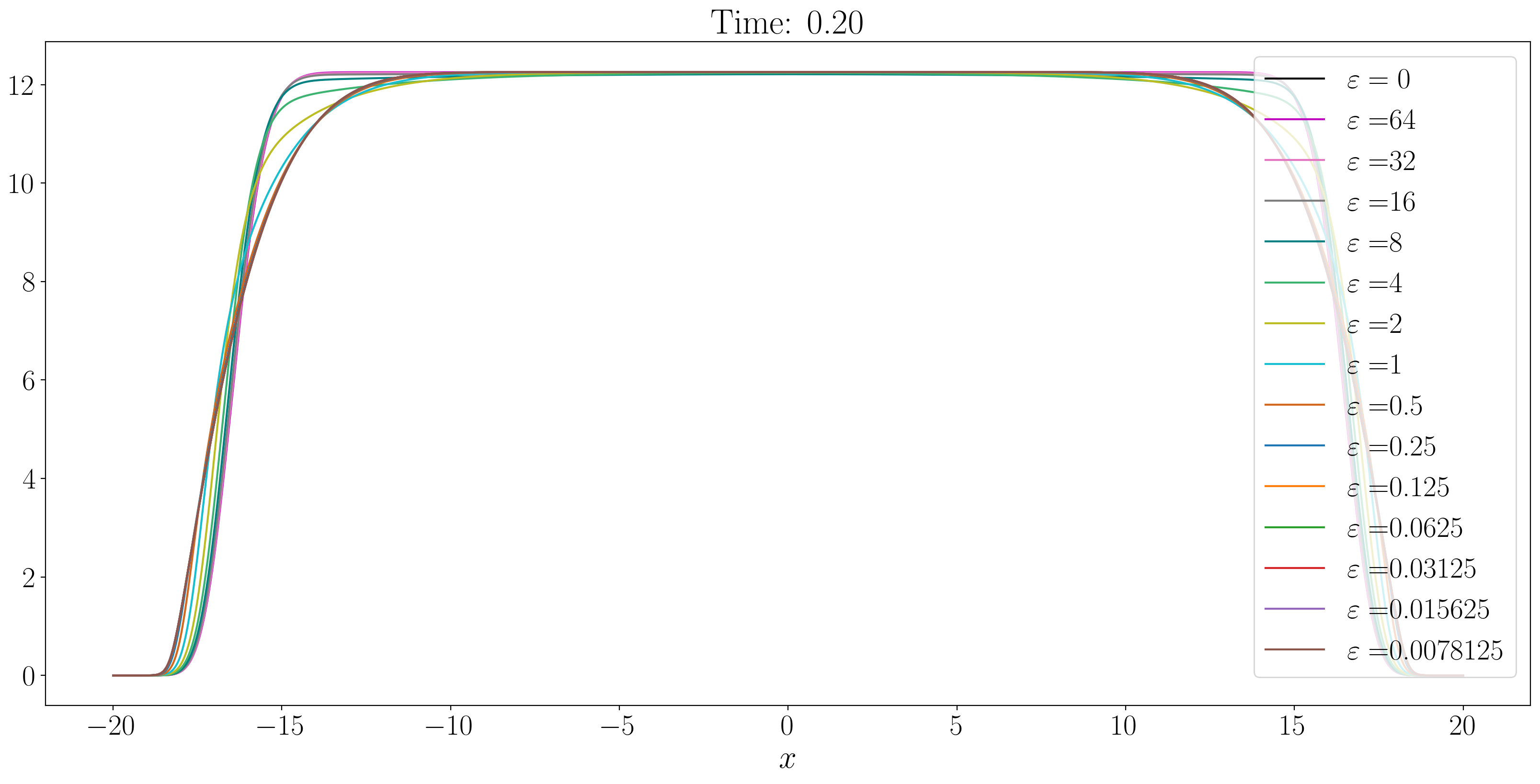} 
\end{subfigure}
\hfill
\centering
\begin{subfigure}{0.75 \textwidth}
\centering
\includegraphics[width = \textwidth]{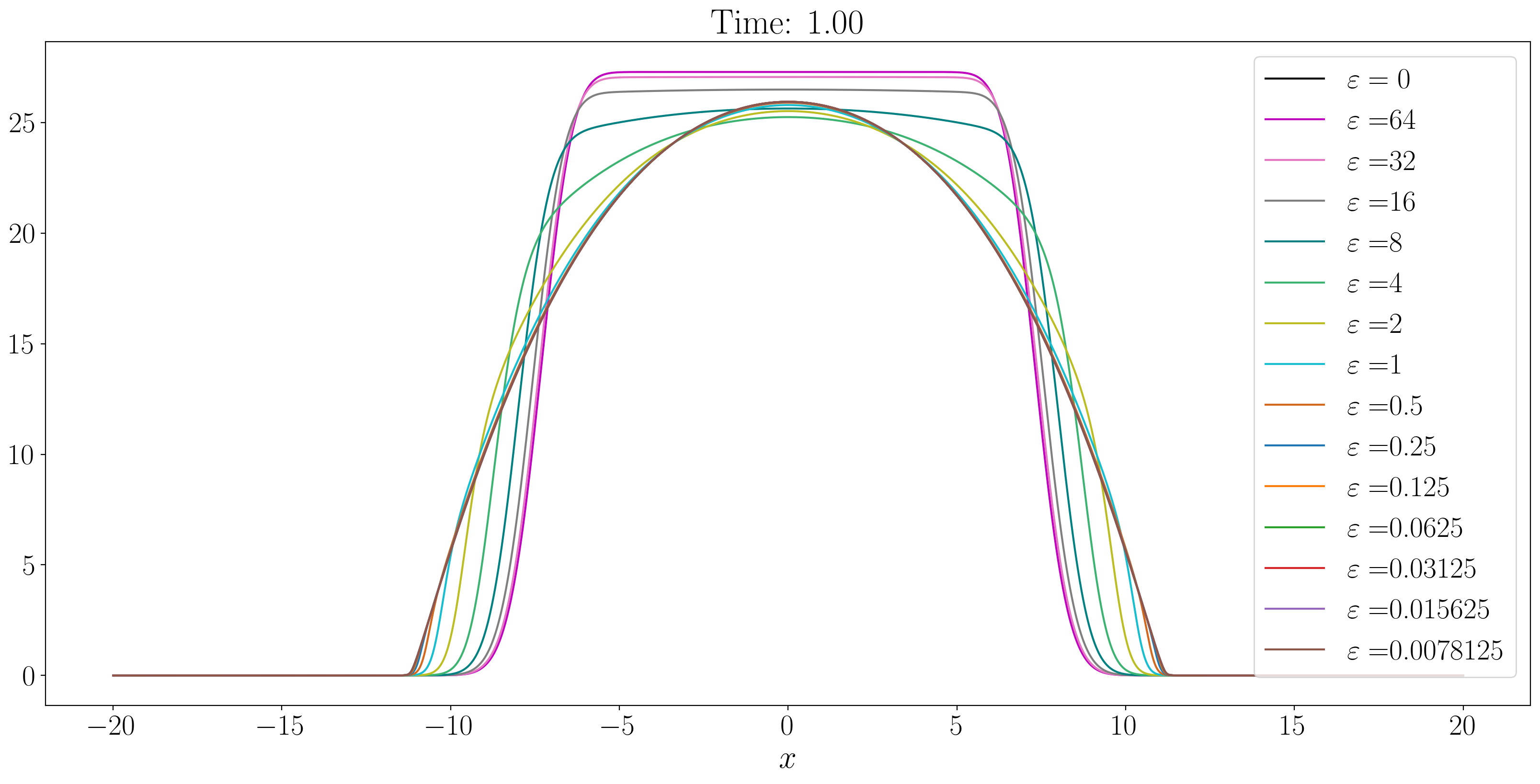} 
\end{subfigure}
\hfill
\centering
\begin{subfigure}{0.75 \textwidth}
\centering
\includegraphics[width =\textwidth]{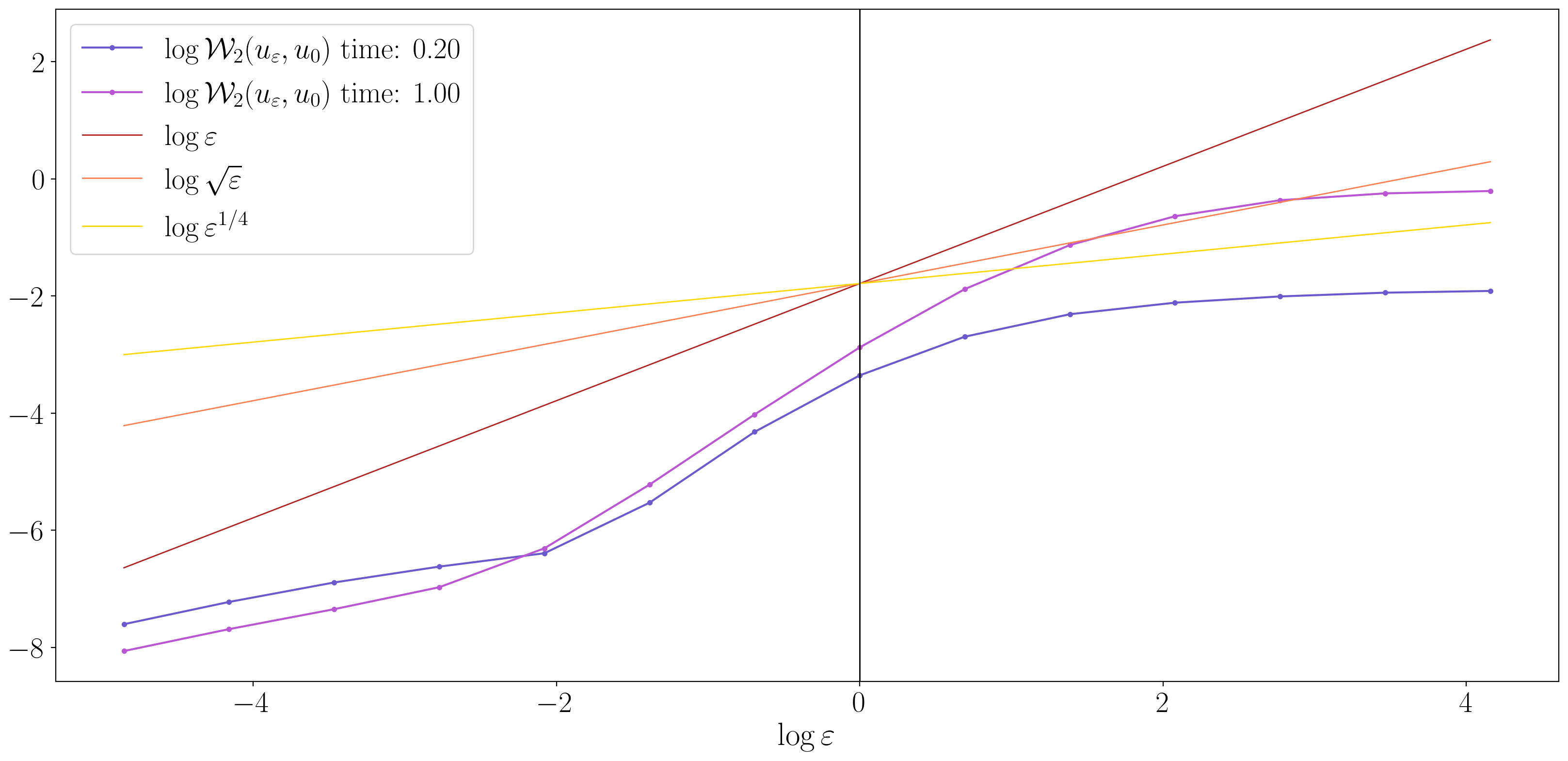} 
\end{subfigure}
\caption{Porous medium Fokker-Planck equation and blob method: order of convergence for $\mathcal{W}_2(\ueps, u_0)$, $u_0$ solution of \eqref{eq:local_PDE_FoPla} and $\ueps$ solution of \eqref{eq:nonlocal_PDE_one_dimension_FoPla}. The computational domain is $\Omega = (-20, 20)$,  $u^0(x) = 1/|\Omega| + 10$ is the initial datum, a uniform grid of $N=2^{13}$ cells was used with $\Delta t = 0.01$.}
\label{Fig:UnifConv}
\end{figure}

\appendix
\section{Complementary material}

\subsection{Nonlocal and local models: acquisition of regularity}

In Figure \ref{Fig:RandomConv} we have the results for an interesting case, in which we consider an initial datum $u^0$, that is randomly generated and piecewise constant, with different values in each cell of the domain $\Omega$. We can see in Figure \ref{Fig:RandomConv} the same convergence behaviour previously observed: the Wasserstein distance $\mathcal{W}_2$ between $u_0$ and $\ueps$ initially decreases like $\sqrt{\varepsilon}$ for small values of $\varepsilon$, with convergence slowing down for $\varepsilon \gg 1$. As time increases, the convergence becomes faster for $\varepsilon \ll 1$, as reported in Figure \ref{Fig:RandomConv}. 

In addition in Figure \ref{Fig:RandomProfiles} we can test a property that holds true for the porous medium equation: starting from an initial datum $u^0(x) = u(x, 0) \in L^1(\mathbb{R}) \cap L^{\infty}(\mathbb{R})$, the solution $u_0(x,t)$ becomes immediately H\"{o}lder regular for $t>0$ (the reader can refer to \cite{ref:Urbano2008, ref:Santambrogio2015} for more information on this property). This is not the case for the solution $\ueps(x,t)$ of the nonlocal approximation: while the solution remains in 
 $L^1(\mathbb{R}) \cap L^{\infty}(\mathbb{R})$, it does not acquire the same regularity as 
$u_0$ as time progresses. This fact could be noticed also in Figure \ref{Fig:RandomConv}, where we can observe that $\ueps$ does not gain regularity for larger values of $\ueps$ in the center of the domain, where the density is driven by the Fokker-Planck term.

\begin{figure}[H]
\centering
\begin{subfigure}{0.75 \textwidth}
\includegraphics[width = \textwidth]{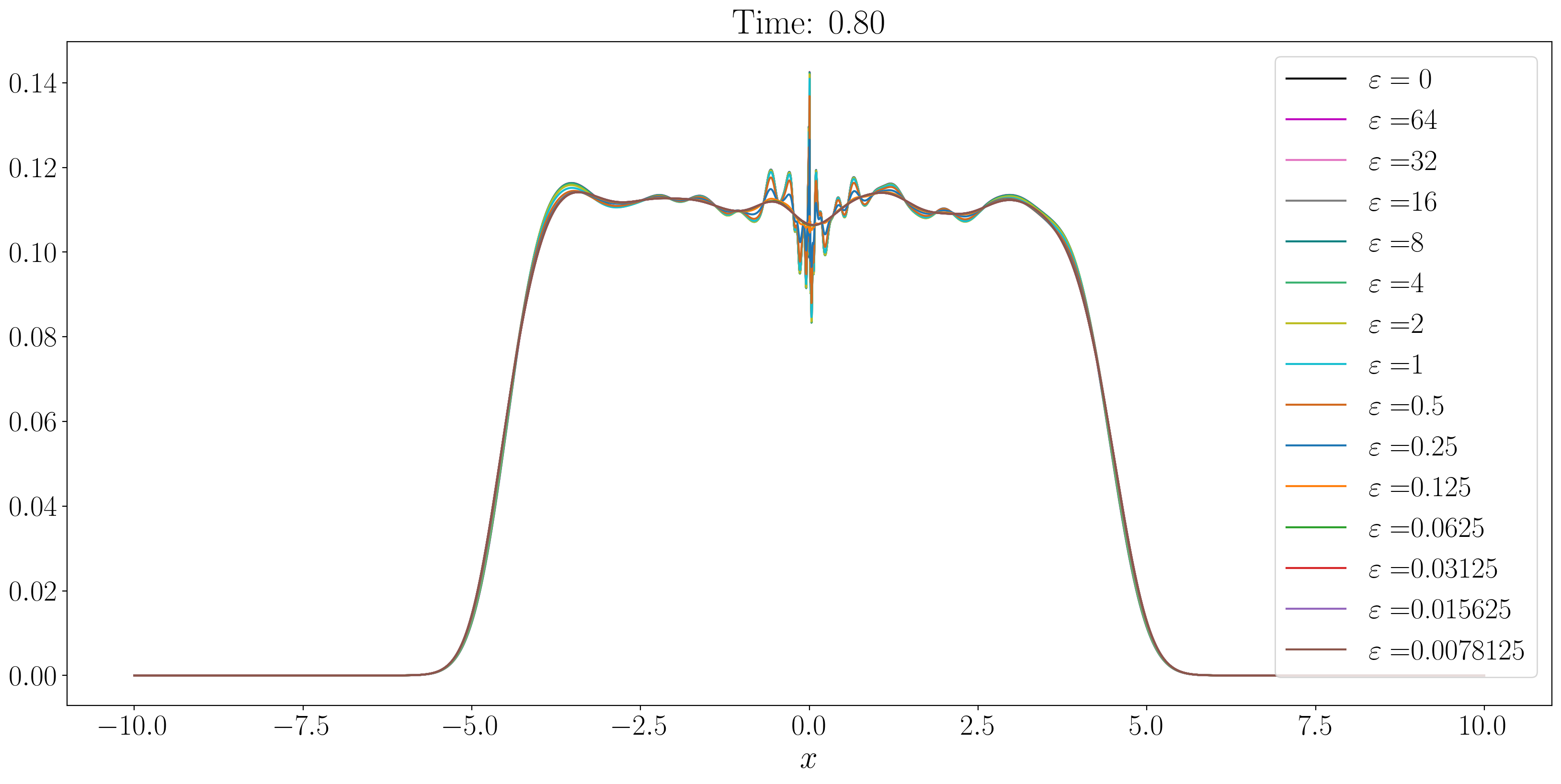} 
\end{subfigure}
\hfill
\centering
\begin{subfigure}{0.75 \textwidth}
\centering
\includegraphics[width = \textwidth]{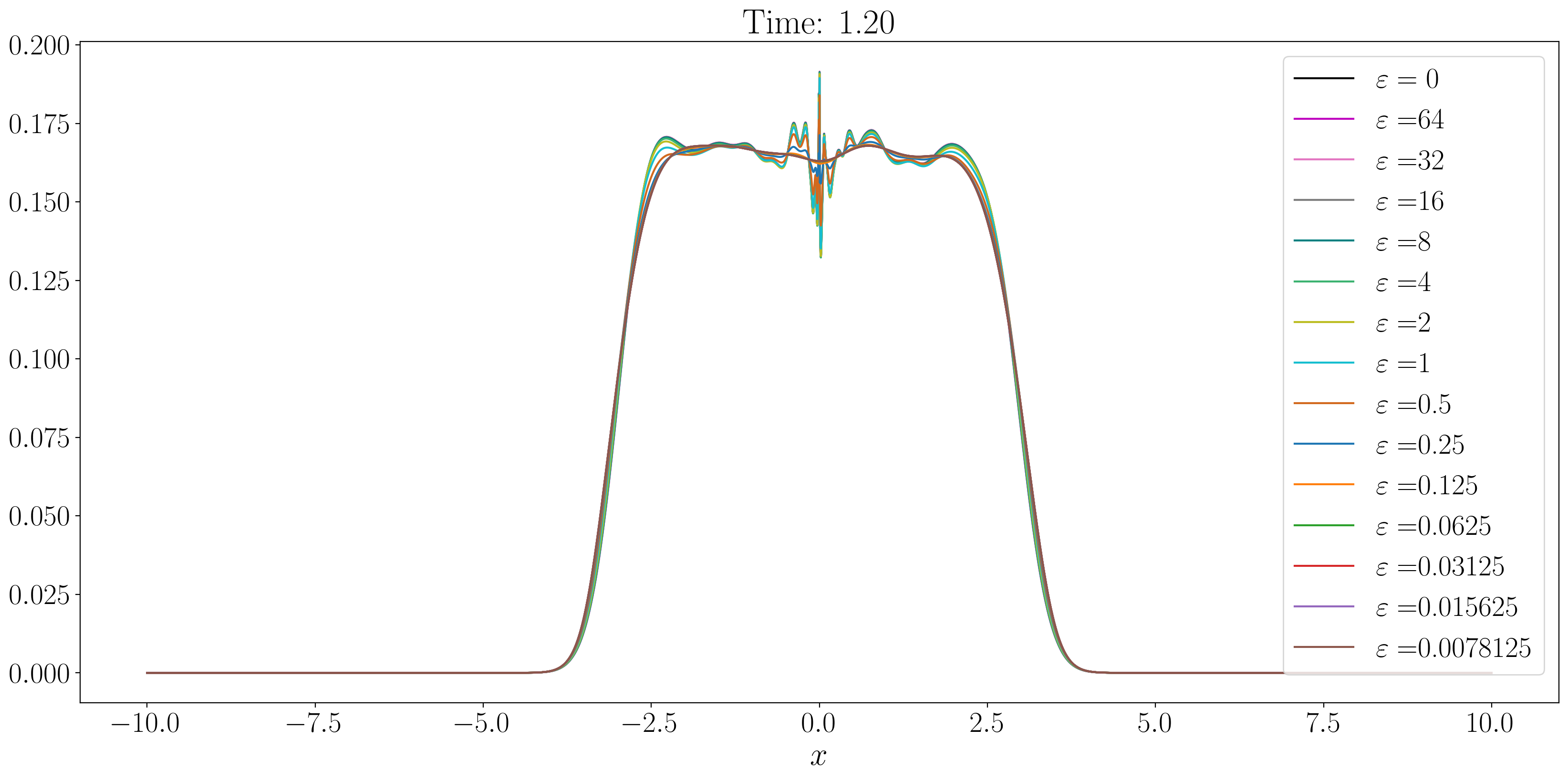} 
\end{subfigure}
\hfill
\centering
\begin{subfigure}{0.75 \textwidth}
\centering
\includegraphics[width =\textwidth]{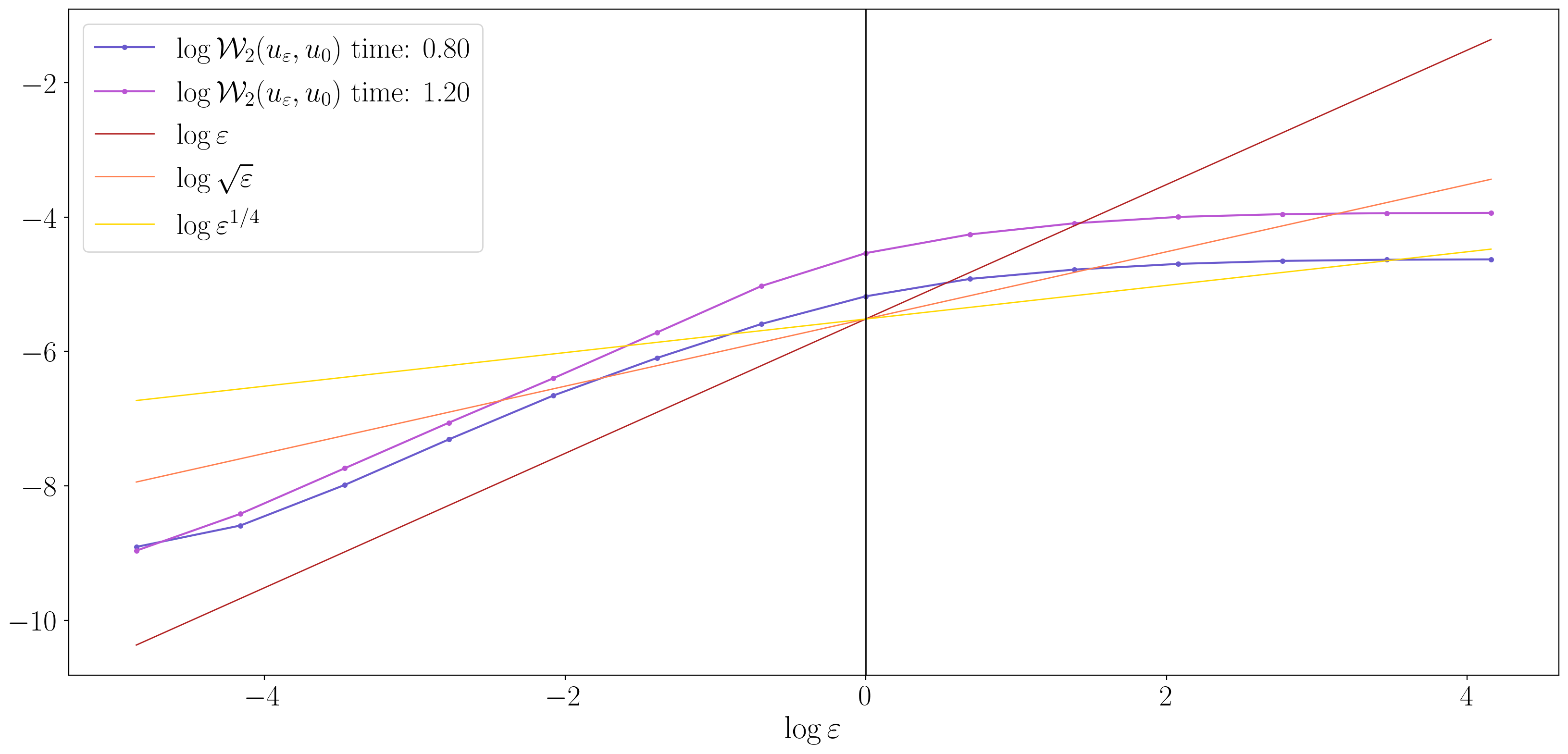} 
\end{subfigure}
\caption{Porous medium equation and blob method: order of convergence for $\mathcal{W}_2(\ueps, u_0)$, $u_0$ (solution of \eqref{eq:local_PDE_FoPla}) and $\ueps$ (solution of \eqref{eq:nonlocal_PDE_one_dimension_FoPla}). The computational domain is $\Omega = (-10, 10)$,  $u^0(x) = u(x, 0)$, the initial datum, is generated using random values on a uniform grid of $N=2^{12}$ cells, with $\Delta t = 0.01$.}
\label{Fig:RandomConv}
\end{figure}

\begin{figure}[H]
\centering
\begin{subfigure}{0.7 \textwidth}
\includegraphics[width = \textwidth]{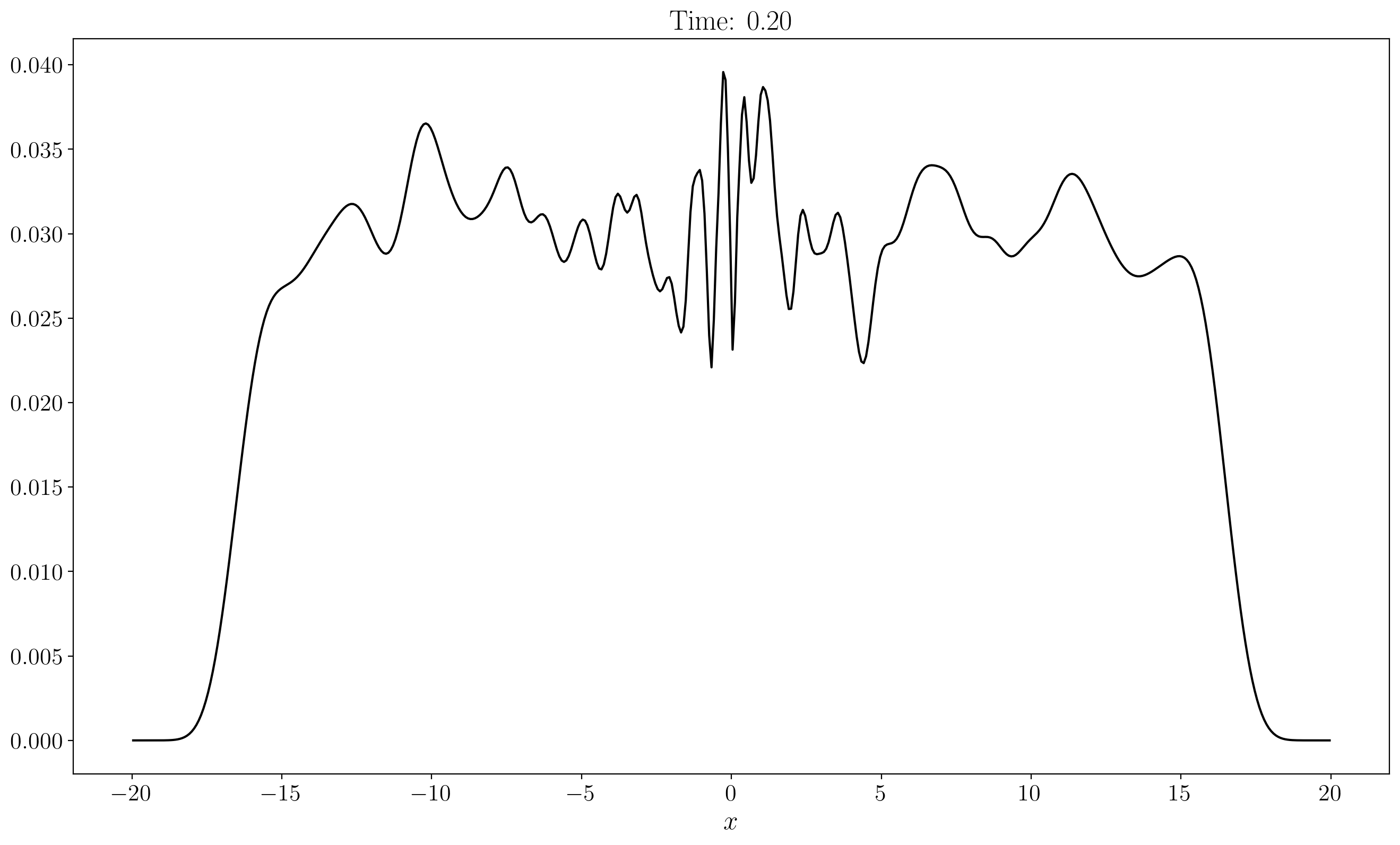} 
\caption{$\varepsilon = 0$}
\end{subfigure}
\hfill
\centering
\begin{subfigure}{0.7 \textwidth}
\centering
\includegraphics[width = \textwidth]{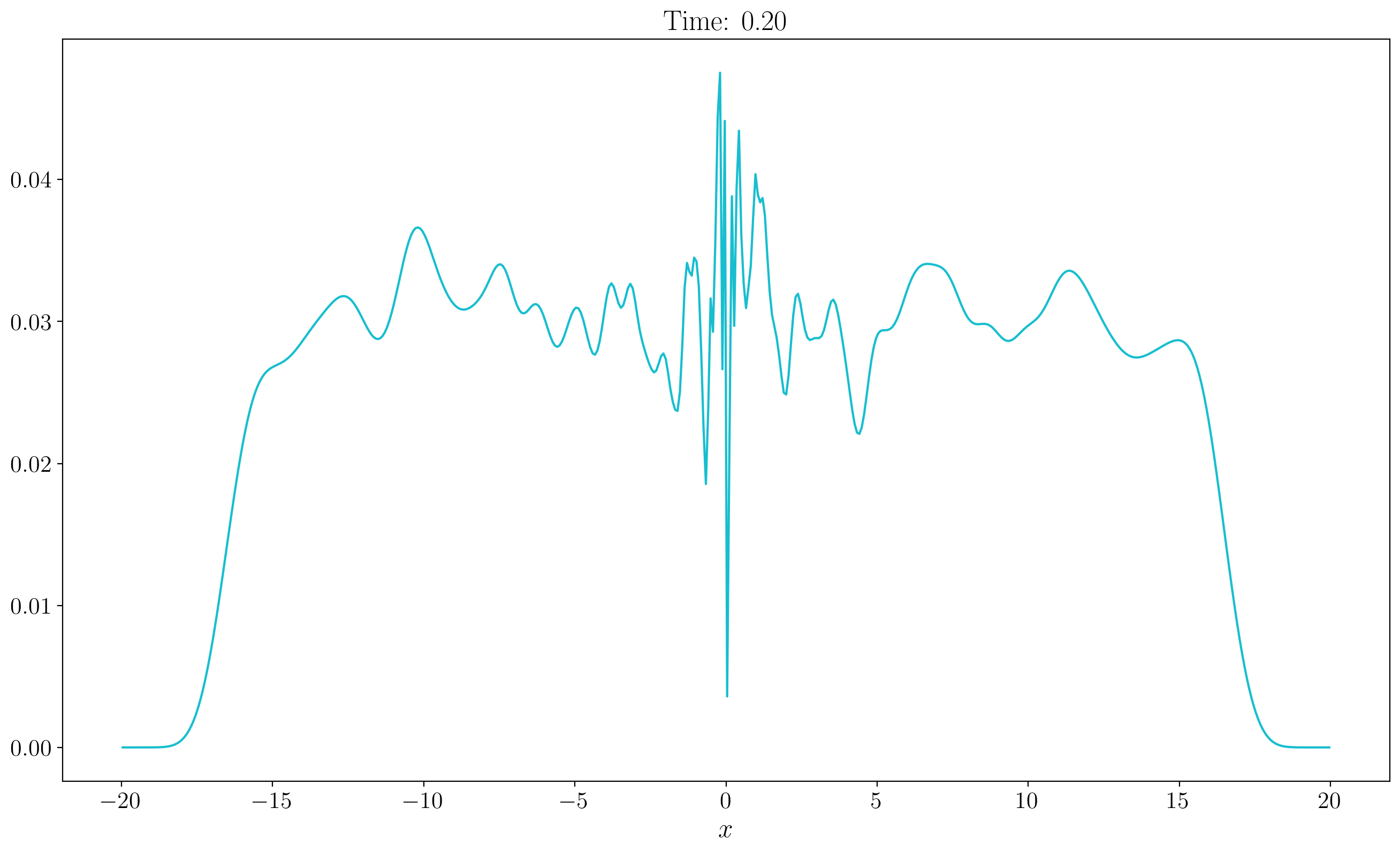} 
\caption{$\varepsilon = 1$}
\end{subfigure}
\caption{Porous medium equation and blob method: $u$ (solution of \eqref{eq:local_PDE_FoPla}) and $\ueps$ (solution of \eqref{eq:nonlocal_PDE_one_dimension_FoPla}) for $\varepsilon=1$. $\Omega = (-20, 20)$,  $u^0(x) = u(x, 0)$ initial datum generated using random values on a uniform grid of $N=2^9$ cells, $\Delta t = 0.01$.}
\label{Fig:RandomProfiles}
\end{figure}

\section*{Acknowledgements}
This work was supported by the Advanced Grant Nonlocal-CPD (Nonlocal PDEs for Complex Particle Dynamics: Phase Transitions, Patterns and Synchronization) of the European Research Council Executive Agency (ERC) under the European Union’s Horizon 2020 research and innovation programme (grant agreement No. 883363). JAC was also partially supported by the EPSRC grant numbers EP/T022132/1 and EP/V051121/1. CE was supported by the ERC
AdG 101054420 EYAWKAJKOS project.

\bibliographystyle{abbrv}
\bibliography{fastlimit}

\begin{thebibliography}{10}

\bibitem{amassad2025deterministic}
A.~Amassad and D.~Zhou.
\newblock A deterministic particle method for the porous media equation.
\newblock {\em arXiv preprint arXiv:2501.18745}, 2025.

\bibitem{MR3050280}
L.~Ambrosio and N.~Gigli.
\newblock A user's guide to optimal transport.
\newblock In {\em Modelling and optimisation of flows on networks}, volume 2062
  of {\em Lecture Notes in Math.}, pages 1--155. Springer, Heidelberg, 2013.

\bibitem{MR2401600}
L.~Ambrosio, N.~Gigli, and G.~Savar\'{e}.
\newblock {\em Gradient flows in metric spaces and in the space of probability
  measures}.
\newblock Lectures in Mathematics ETH Z\"{u}rich. Birkh\"{a}user Verlag, Basel,
  second edition, 2008.

\bibitem{carrillofronzoni2024}
R.~Bailo, J.~A. Carrillo, S.~Fronzoni, and D.~Gómez-Castro.
\newblock A finite-volume scheme for fractional diffusion on bounded domains.
\newblock {\em European Journal of Applied Mathematics}, page 1–21, 2024.

\bibitem{burger2022porous}
M.~Burger and A.~Esposito.
\newblock Porous medium equation and cross-diffusion systems as limit of
  nonlocal interaction.
\newblock {\em Nonlinear Anal.}, 235:Paper No. 113347, 30, 2023.

\bibitem{art_new_formula_Wasserstein}
J.~Carrillo, P.~Gwiazda, and J.~Skrzeczkowski.
\newblock A new semigroup formula for the {W}asserstein distance between two
  {W}asserstein gradient flows.
\newblock {\em In preparation}, 2025.

\bibitem{CarrilloChertock2015}
J.~A. Carrillo, A.~Chertock, and Y.~Huang.
\newblock A finite-volume method for nonlinear nonlocal equations with a
  gradient flow structure.
\newblock {\em Commun. Comput. Phys.}, 17(1):233--258, 2015.

\bibitem{MR3913840}
J.~A. Carrillo, K.~Craig, and F.~S. Patacchini.
\newblock A blob method for diffusion.
\newblock {\em Calc. Var. Partial Differential Equations}, 58(2):Paper No. 53,
  53, 2019.

\bibitem{carrillo2024nonlocal}
J.~A. Carrillo, A.~Esposito, J.~Skrzeczkowski, and J.~S.-H. Wu.
\newblock Nonlocal particle approximation for linear and fast diffusion
  equations.
\newblock {\em arXiv preprint arXiv:2408.02345}, 2024.

\bibitem{carrillo2023nonlocal}
J.~A. Carrillo, A.~Esposito, and J.~S.-H. Wu.
\newblock Nonlocal approximation of nonlinear diffusion equations.
\newblock {\em Calc. Var. Partial Differential Equations}, 63(4):Paper No. 100,
  44, 2024.

\bibitem{MR2935390}
J.~A. Carrillo, L.~C.~F. Ferreira, and J.~C. Precioso.
\newblock A mass-transportation approach to a one dimensional fluid mechanics
  model with nonlocal velocity.
\newblock {\em Adv. Math.}, 231(1):306--327, 2012.

\bibitem{MR4858611}
K.~Craig, M.~Jacobs, and O.~Turanova.
\newblock Nonlocal approximation of slow and fast diffusion.
\newblock {\em J. Differential Equations}, 426:782--852, 2025.

\bibitem{david2023degenerate}
N.~David, T.~Dębiec, M.~Mandal, and M.~Schmidtchen.
\newblock A degenerate cross-diffusion system as the inviscid limit of a
  nonlocal tissue growth model.
\newblock {\em SIAM J. Math. Anal.}, 56(2):2090--2114, 2024.

\bibitem{MR4324293}
N.~David and B.~Perthame.
\newblock Free boundary limit of a tumor growth model with nutrient.
\newblock {\em J. Math. Pures Appl. (9)}, 155:62--82, 2021.

\bibitem{Hecht2023porous}
M.~Doumic, S.~Hecht, B.~Perthame, and D.~Peurichard.
\newblock Multispecies cross-diffusions: from a nonlocal mean-field to a porous
  medium system without self-diffusion.
\newblock {\em J. Differential Equations}, 389:228--256, 2024.

\bibitem{MR4188329}
T.~Dębiec, B.~Perthame, M.~Schmidtchen, and N.~Vauchelet.
\newblock Incompressible limit for a two-species model with coupling through
  {B}rinkman's law in any dimension.
\newblock {\em J. Math. Pures Appl. (9)}, 145:204--239, 2021.

\bibitem{MR4146915}
T.~Dębiec and M.~Schmidtchen.
\newblock Incompressible limit for a two-species tumour model with coupling
  through {B}rinkman's law in one dimension.
\newblock {\em Acta Appl. Math.}, 169:593--611, 2020.

\bibitem{elbar2023inviscid}
C.~Elbar and J.~Skrzeczkowski.
\newblock On the inviscid limit connecting {B}rinkman's and {D}arcy's models of
  tissue growth with nonlinear pressure.
\newblock {\em J. Math. Fluid Mech.}, 27(2):Paper No. 28, 14, 2025.

\bibitem{MR1821479}
P.-L. Lions and S.~Mas-Gallic.
\newblock Une m\'{e}thode particulaire d\'{e}terministe pour des \'{e}quations
  diffusives non lin\'{e}aires.
\newblock {\em C. R. Acad. Sci. Paris S\'{e}r. I Math.}, 332(4):369--376, 2001.

\bibitem{MR3695889}
A.~Mellet, B.~Perthame, and F.~Quir\'{o}s.
\newblock A {H}ele-{S}haw problem for tumor growth.
\newblock {\em J. Funct. Anal.}, 273(10):3061--3093, 2017.

\bibitem{Ol}
K.~Oelschl\"ager.
\newblock Large systems of interacting particles and the porous medium
  equation.
\newblock {\em J. Differential Equations}, 88(2):294--346, 1990.

\bibitem{MR3162474}
B.~Perthame, F.~Quir\'{o}s, and J.~L. V\'{a}zquez.
\newblock The {H}ele-{S}haw asymptotics for mechanical models of tumor growth.
\newblock {\em Arch. Ration. Mech. Anal.}, 212(1):93--127, 2014.

\bibitem{MR3260280}
B.~Perthame, M.~Tang, and N.~Vauchelet.
\newblock Traveling wave solution of the {H}ele-{S}haw model of tumor growth
  with nutrient.
\newblock {\em Math. Models Methods Appl. Sci.}, 24(13):2601--2626, 2014.

\bibitem{Perthame-incompressible-visco}
B.~Perthame and N.~Vauchelet.
\newblock Incompressible limit of a mechanical model of tumour growth with
  viscosity.
\newblock {\em Philos. Trans. Roy. Soc. A}, 373(2050):20140283, 16, 2015.

\bibitem{ref:Santambrogio2015}
F.~Santambrogio.
\newblock {\em Optimal transport for applied mathematicians}, volume~87 of {\em
  Progress in Nonlinear Differential Equations and their Applications}.
\newblock Birkh\"auser/Springer, Cham, 2015.
\newblock Calculus of variations, PDEs, and modeling.

\bibitem{MR3037041}
M.~Tang, N.~Vauchelet, I.~Cheddadi, I.~Vignon-Clementel, D.~Drasdo, and
  B.~Perthame.
\newblock Composite waves for a cell population system modeling tumor growth
  and invasion.
\newblock {\em Chinese Ann. Math. Ser. B}, 34(2):295--318, 2013.

\bibitem{ref:Urbano2008}
J.~M. Urbano.
\newblock {\em The method of intrinsic scaling}, volume 1930 of {\em Lecture
  Notes in Mathematics}.
\newblock Springer-Verlag, Berlin, 2008.
\newblock A systematic approach to regularity for degenerate and singular PDEs.

\bibitem{MR1964483}
C.~Villani.
\newblock {\em Topics in optimal transportation}, volume~58 of {\em Graduate
  Studies in Mathematics}.
\newblock American Mathematical Society, Providence, RI, 2003.

\end{thebibliography}

\end{document}